\title{Packing and domination parameters in digraphs}
\author{Doost Ali Mojdeh$^1$, Babak Samadi$^2$ and Ismael G. Yero$^3$\\[0.5cm]
$^{1,2}$Department of Mathematics, University of Mazandaran, Babolsar, Iran\\
{\it $^1$damojdeh@umz.ac.ir,} {\it $^2$samadibabak62@gmail.com}\\[0.2cm]
$^3$Departamento de Matem\'{a}ticas, Universidad de C\'{a}diz, Algeciras, Spain\\
{\it $^3$ismael.gonzalez@uca.es}}
\date{}
 \newtheorem{theorem}{Theorem}[section]
\newtheorem{lemma}[theorem]{Lemma}
\newtheorem{p}{Problem}
\theoremstyle{definition}
\newtheorem{definition}[theorem]{Definition}
\newtheorem{rem}[theorem]{Remark}
\begin{document}

\maketitle
\begin{abstract}
Given a digraph $D=(V,A)$, a set $B\subset V$ is a packing set in $D$ if there are no arcs joining vertices of $B$ and for any two vertices $x,y\in B$ the sets of in-neighbors of $x$ and $y$ are disjoint. The set $S$ is a dominating set (an open dominating set) in $D$ if every vertex not in $S$ (in $V$) has an in-neighbor in $S$. Moreover, a dominating set $S$ is called a total dominating set if the subgraph induced by $S$ has no isolated vertices. The packing sets of maximum cardinality and the (total, open) dominating sets of minimum cardinality in digraphs are studied in this article. We prove that the two optimal sets concerning packing and domination achieve the same value for directed trees, and give some applications of it. We also show analogous equalities for all connected contrafunctional digraphs, and characterize all such digraphs $D$ for which such equalities are satisfied.  Moreover, sharp bounds on the maximum and the minimum cardinalities of packing and dominating sets, respectively, are given for digraphs. Finally, we present solutions for two open problems, concerning total and open dominating sets of minimum cardinality, pointed out in [Australas. J. Combin. 39 (2007), 283--292].\vspace{1mm}

\noindent
{\bf Keywords:} Domination number, packing number, total domination number, open domination number, directed tree, contrafunctional digraph.\vspace{.5mm}\\
{\bf MSC 2010}: 05C20, 05C69.
\end{abstract}


\section{Introduction}

Aspects concerning domination (and packings) in graphs have attracted the attention of a high number of researchers in the last few decades. The topic has found a number of applications to several real life problems and there are numerous problems on domination which remains open. For more information on domination topics we suggest the books \cite{hhs2,HeYe} and references cited therein. Domination topics in digraphs are less common, although a significant increment of them can be noticed in the last five years and a number of open problems is being raised up. It is then, a goal of this work, to give several good results concerning relationships between some different styles of domination parameters for digraphs, and meanwhile, settle two open problems which are already known in this topic.

On the other hand, we may remark that our study also contributes to decreasing the not balanced situation existent in the literature between graphs and digraphs. That is, graphs and (directed graphs) digraphs are mathematical structures which naturally appear in several models of real life problems, and actually digraphs are very frequently more realistic than graphs while modeling a situation. However, the study of both structures is not in correspondence with this fact. The theory of graphs is significantly more developed than the theory of digraphs. For instance, if we simply make a query at the MathSciNet database with the word ``graph'' we get 73969 articles (32355 in the last ten years), while a similar query with the word ``digraph'' gives an answer with only 3246 results (1288 in the last ten years). That is clearly not fair. A similar situation occurs if we join such words with an extra term. For instance, the words ``domination number'', ``dominating set'' and ``packing'' (subjects of this work) produce similar results. In this sense, throughout this exposition we significantly contribute to decreasing such not balanced relationship between graphs and digraphs for the specific case of (total, open) dominating sets and packing sets.

Throughout this paper, we consider $D=(V(D),A(D))$ as a finite digraph with vertex set $V=V(D)$ and arc set $A=A(D)$ with neither loops nor multiple arcs (although pairs of opposite arcs are allowed). Also, $G=(V(G),E(G))$ stands for a simple finite graph with vertex set $V(G)$ and edge set $E(G)$. We use \cite{bg} and \cite{we} as references for some very basic terminology and notation in digraphs and graphs, respectively, which are not explicitly defined here.

For any two vertices $u,v\in V(D)$, we write $(u,v)$ as the \emph{arc} with direction from $u$ to $v$, and say $u$ is \emph{adjacent to} $v$, or $v$ is \emph{adjacent from} $u$. Given a subset $S$ of vertices of a digraph $D$ and a vertex $v\in V(D)$, the {\em in-neighborhood} of $v$ from $S$ ({\em out-neighborhood} of $v$ to $S$) is $N_S^{-}(v)=\{u\in S\mid(u,v)\in A(D)\}$ ($N_S^{+}(v)=\{u\in S\mid(v,u)\in A(D)\}$). The \emph{in-degree} of $v$ from $S$ is $deg_S^-(v)=|N_S^{-}(v)|$ and the \emph{out-degree} of $v$ to $S$ is $deg_S^+(v)=|N_S^{+}(v)|$. Moreover, $N_S^{-}[v]=N_{S}^{-}(v)\cup\{v\}$ is the {\em closed in-neighborhood} of $v$ from $S$ ($N_S^{+}[v]=N_{S}^{+}(v)\cup\{v\}$ is the {\em closed out-neighborhood} of $v$ to $S$). If particularly, $S=V(D)$, then we simply say (open or closed) (in or out)-neighborhood and (in or out)-degree, and write $N_D^{+}(v)$, $N_D^{-}(v)$, $N_D^{+}[v]$, $N_D^{-}[v]$, $deg_D^+(v)$ and $deg_D^-(v)$ (or $N^{+}(v)$, $N^{-}(v)$, $N^{+}[v]$, $N^{-}[v]$, $deg^+(v)$ and $deg^-(v)$ if there is no ambiguity with respect to the digraph $D$), instead of $N_{V(D)}^{+}(v)$, $N_{V(D)}^{-}(v)$, $N_{V(D)}^{+}[v]$, $N_{V(D)}^{-}[v]$, $deg_{V(D)}^+(v)$ and $deg_{V(D)}^-(v)$, respectively. We similarly proceed with any other notation which uses such style of subscripts. Let $S\subseteq V(D)$ and $u\in S$. A vertex $v$ in $V(D)$ is called a {\em private out-neighbor} ({\em private in-neighbor}) of $u$ with respect to $S$ if $N^{-}[v]\cap S=\{u\}$ ($N^{+}[v]\cap S=\{u\}$). The set of all private out-neighbors (private in-neighbors) of $u$ with respect to $S$ is denoted by $pn^{+}(u,S)$ ($pn^{-}(u,S)$). Given two sets $A$ and $B$ of vertices of $D$, by $(A,B)_D$ we mean the sets of arcs of $D$ going from $A$ to $B$, that is, $(A,B)_D=\{(a,b)\in A(D)\mid a\in A,b\in B\}$.

A digraph $D$ is {\em connected} if its underlying graph is connected. A {\em rooted tree} is a connected digraph with a vertex of in-degree $0$, called the {\em root}, such that every vertex different from the root has in-degree $1$. In a rooted tree, the vertex of out-degree $0$ is called a {\em leaf} and its in-neighbor is a {\em support vertex}.
A {\em binary tree} is a rooted tree in which the number of out-neighbors of each vertex in zero or two. The {\em height} $h(T)$ of a rooted tree $T$ is $h(T)=\mbox{max}\{d_{T}(r,v)\mid r\ \mbox{is the root and}\ v\in V(T)\}$. A {\em directed star} $S_{n}$ on $n$ vertices is a rooted tree of order $n$ with $h(S_{n})=1$. A digraph $D$ is {\em contrafunctional} if every vertex of $D$ has in-degree one.

A {\em $k$-sun} on $2k$ vertices is a construction starting with a Hamiltonian graph $G$ of order $k$, with Hamilton cycle $v_{1},\dots,v_{k}$, next $k$ new vertices $u_{1},\dots,u_{k}$ are added so that each $u_{i}v_{i},u_{i}v_{i+1}\in E(G)$ (mod $k$). A vertex $v$ of $G$ is {\em simplicial} if $N[v]$ induces a clique. A {\em simplicial elimination ordering} is an ordering $v_{n},\dots,v_{1}$ for deletion of vertices so that each vertex $v_{i}$ is a simplicial vertex of the remaining graph induced by $\{v_{1},\dots,v_{i}\}$. A graph $G$ is {\em chordal} if it has no induced cycle with four vertices or more, and $G$ is {\em strongly chordal} if it is chordal and contains no $k$-suns as induced subgraphs. We have the following classic result of Dirac (\cite{d}) concerning chordal graphs.
\begin{theorem}\emph{(Dirac (1961))}\label{Dirac}
A simple graph is chordal if and only if it has a simplicial elimination ordering of vertices.
\end{theorem}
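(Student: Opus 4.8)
The plan is to prove the two implications separately; the forward (``only if'') direction will rest on the classical fact that every chordal graph has a simplicial vertex. For the ``if'' direction, suppose $G$ admits a simplicial elimination ordering $v_{n},\dots,v_{1}$ and, for contradiction, that $G$ contains an induced cycle $C$ on at least four vertices. Let $v_{i}$ be the vertex of $C$ with largest index, i.e.\ the first vertex of $C$ to be removed. At that stage all of $C$ still lies in the graph $G_{i}$ induced by $\{v_{1},\dots,v_{i}\}$, so $C$ remains an induced cycle of $G_{i}$; but $v_{i}$ is simplicial in $G_{i}$, so the two neighbours of $v_{i}$ along $C$ must be adjacent, contradicting that $C$ is chordless of length at least $4$. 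Hence $G$ is chordal.

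For the ``only if'' direction I would establish three ingredients, in order: (i) every induced subgraph of a chordal graph is chordal (immediate, since an induced cycle in an induced subgraph is an induced cycle in $G$); (ii) in a chordal graph $G$, every inclusion-minimal vertex set $S$ separating two non-adjacent vertices $a,b$ induces a complete subgraph; and (iii) every chordal graph has a simplicial vertex, and moreover, if it is not complete, it has two non-adjacent simplicial vertices. Granting (iii), the desired ordering is built greedily: pick a simplicial vertex and call it $v_{n}$, delete it, and repeat on $G-v_{n}$, which is still chordal by (i); the vertices listed in order of deletion form a simplicial elimination ordering, since each $v_{i}$ was chosen simplicial in the graph induced by the not-yet-deleted vertices $\{v_{1},\dots,v_{i}\}$.

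The heart of the argument is (ii) and (iii). For (ii): by minimality of $S$, each $x\in S$ has a neighbour in the component $A$ of $G-S$ containing $a$ and a neighbour in the component $B$ containing $b$; given $x,y\in S$, choose a shortest $x$--$y$ path with interior in $A$ and a shortest $x$--$y$ path with interior in $B$, and observe that their union is a cycle of length at least $4$ whose only possible chord is $xy$ (there are no $A$--$B$ edges, and each path is a geodesic), so chordality forces $xy\in E(G)$. For (iii): induct on $|V(G)|$; if $G$ is complete every vertex is simplicial, otherwise fix non-adjacent $a,b$ and a minimal $a$--$b$ separator $S$, which is a clique by (ii). Applying the induction hypothesis to the chordal graph $G[A\cup S]$ gives a simplicial vertex, and if $G[A\cup S]$ is not complete two non-adjacent ones; since $S$ is a clique, at least one such vertex $v$ lies in $A$, and because $A$ is a component of $G-S$ its neighbourhood in $G$ coincides with its neighbourhood in $G[A\cup S]$, so $v$ is simplicial in $G$. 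Symmetrically one obtains a simplicial vertex of $G$ in $B$, and the two lie in distinct components of $G-S$, hence are non-adjacent.

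The main obstacle I expect is the bookkeeping inside (iii): one must ensure the induction hypothesis is applied to a strictly smaller graph (true because $B\neq\emptyset$ forces $|V(G[A\cup S])|<|V(G)|$), and the strengthening to ``two non-adjacent simplicial vertices'' must be threaded through the induction so that a simplicial vertex can always be located on the $A$-side (and by symmetry the $B$-side). Property (ii) — that minimal separators are cliques — is precisely what makes this localization work, so establishing (ii) cleanly is the crux of the whole proof.
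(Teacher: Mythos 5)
The paper offers no proof of this statement: it is quoted verbatim as a classical theorem of Dirac (1961) with a citation, and is used later only as a black box (to conclude that $G_T$ is chordal once a simplicial elimination ordering is exhibited). So there is nothing in the paper to compare your argument against; what you have written is the standard textbook proof, and it is correct. The ``if'' direction (the highest-indexed vertex of a putative chordless cycle is simplicial in the induced subgraph still containing the whole cycle, forcing a chord between its two cycle-neighbours) is sound, and your three-step route for the ``only if'' direction --- heredity of chordality, minimal separators are cliques, existence of a simplicial vertex (strengthened to two non-adjacent ones when $G$ is not complete) --- is exactly Dirac's original argument. The two points that genuinely need care are ones you already identify and handle: in (ii), that the only conceivable chord of the cycle $x\,P_A\,y\,P_B\,x$ is $xy$ itself (interior--interior chords within one path contradict minimality of that path, and $A$--$B$ chords cannot exist), and in (iii), that the clique property of $S$ is what guarantees one of the two non-adjacent simplicial vertices of $G[A\cup S]$ falls in $A$, where its neighbourhood is unchanged in $G$. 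No gaps.
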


Our work is organized as follows. The next subsection is dedicated to describe some terminology and notation which we shall use throughout our exposition. Section \ref{sect-trees} is centered in the study of directed trees. For instance, we prove that the two optimal sets concerning packing and domination achieve the same value for directed trees, and give some applications of it. Section \ref{sect-gen-contrafunc} gives more general results and specify some other other ones for the case of connected contrafunctional digraphs. That is, we show a bound for the packing number of digraphs, and also prove some analogous equalities, as those ones in trees, for all connected contrafunctional digraphs. We moreover, characterize all such digraphs $D$ for which such equalities are satisfied.  Finally, in Section \ref{sect-total-open} we present some sharp bounds on the maximum and the minimum cardinalities of packing and dominating sets, respectively, which are satisfied by digraphs. We also show here the solutions for two open problems, concerning total and open dominating sets of minimum cardinality, pointed out in [Australas. J. Combin. 39 (2007), 283--292].

\subsection{Terminology on packing and (total, open) domination}

\ \ \ Given a graph $G=(V,E)$, a set $S\subseteq V(G)$ is a {\em dominating set} (a {\em total dominating set}) in $G$ if each vertex in $V(G)\setminus S$ (in $V(G)$) is adjacent to at least one vertex in $S$. The {\em domination number} $\gamma(G)$ ({\em total domination number} $\gamma_{t}(G)$) is the minimum cardinality of a dominating set (a total dominating set) in $G$. A subset $B\subseteq V(G)$ is a {\em packing} set (or a {\em $2$-packing set} as also appeared in the literature) in $G$ if for every distinct vertices $u,v\in B$, $N[u]\cap N[v]=\phi$ (notice that $N[x]$ is the closed neighborhood of $x$ while we do not consider directions of the edges). The {\em packing number} (or {\em $2$-packing number}) $\rho(G)$ is the maximum cardinality of any packing set in $G$. Clearly, $B\subseteq V(G)$ is a packing set in $G$ if and only if $|N[v]\cap B|\leq 1$, for all $v\in V(G)$.

The concepts concerning domination in directed graphs were introduced by Fu \cite{fu} as follows. A subset $S$ of the vertices of a digraph $D$ is called a \emph{dominating set} if every vertex in $V(D)\setminus S$ is adjacent from a vertex in $S$. Now, if one thinks into consider a total domination version for digraphs, it is possible to find two different versions of it in the literature. In one side, from \cite{ajv} we have the next definition. A dominating set $S$ in $D$ is called a \emph{total dominating set} if $D\langle S\rangle$ has no isolated vertices. On a second side, if we read the article \cite{os} for instance, we get the following different definition. A \emph{total dominating set} of a digraph $D$ is a vertex subset $S$ such that any vertex of $D$ is adjacent from a vertex of $S$. Clearly, both definitions are different, and if we longer observe the literature, we will notice that the latter structure coincides with that one called \emph{open dominating sets} (for digraphs) in \cite{ajv}. Moreover, a deeper search in the literature will lead to the fact that the most common definition for total dominating sets in digraphs is this one given in \cite{ajv} (see for instance \cite{h} and references cited therein). Thus, from now on, we assume in this work the definition of total dominating sets in digraphs as given in \cite{ajv}, although we consider that the definition given in \cite{os} as more natural and more according to its non directed version. The domination number $\gamma(D)$, the total domination number $\gamma_{t}(D)$ and the open domination number $\gamma_{o}(D)$ are defined in a natural way, similarly as they are in graphs. From now on, given any parameter $P$ in a graph $G$ (or a digraph $D$), a set of vertices of cardinality $P(G)$ (or $P(D)$) is called a $P(G)$-set (or $P(D)$-set).

For the sake of more exploration into the concept of domination in digraphs, we investigate the concept of packing parameter in digraphs. Volkmann \cite{v} introduced the packing number in digraphs as follows, although such definition has passed unnoticed for the research community since a unique result concerning it was given in such work. It is also now our goal to make some justice to such parameter and properly begin the study of its mathematical properties. A set $B\subseteq V(D)$ is a \emph{packing set} in a digraph $D$ if $N^{-}[u]\cap N^{-}[v]=\phi$ for any two distinct vertices $u,v\in B$. The maximum cardinality of a packing is the \emph{packing number} of $D$, denoted by $\rho(D)$. In what follows, we would prefer to present an equivalent definition of it.

\begin{definition}
The set $B\subseteq D$ is a {\em packing} in $D$ if $|N^{+}[v]\cap B|\leq 1$ for all $v\in V(D)$, and the {\em  packing number} $\rho(D)$ is the largest number of vertices in a packing set of $D$.
\end{definition}

We prove that $\rho(T)=\gamma(T)$, for all directed trees $T$. Using this fact, we show that $\lceil(n-\ell+s)/2\rceil$ is a sharp upper bound on the domination number of a rooted tree $T$ of order $n$ with $\ell$ leaves and $s$ support vertices. Concerning all connected contrafunctional digraph $D$ we prove that $\gamma(D)\in\{\rho(D),\rho(D)+1\}$ and characterize all such digraphs $D$ for which $\gamma(D)=\rho(D)$  and $\gamma(D)=\rho(D)+1$. We give the characterization of all digraphs $D$ of order $n$ with maximum out-degree $\Delta^+$ for which $\gamma_{t}(D)=2n/(2\Delta^++1)$ and  $\gamma_{o}(D)=n/\Delta^+$, hence solving two open problems pointed out in \cite{ajv}.


\section{Directed trees}\label{sect-trees}

\ \ \ \ In this section, we study some relationships between packing and domination numbers in digraphs with emphasis on directed trees. We might remark that a directed tree is an orientation of a tree, which in other words means that it cannot have opposite arcs. We first exhibit the following useful construction.

\begin{rem}\label{SM}
Consider a digraph $D=(V(D),A(D))$. We construct a digraph $G'_{D}$ corresponding to $D$, as follows. For each vertex $v$ of $D$ consider two vertices $v$ and $v'$ and an arc $(v,v')$ for $G'_{D}$. Moreover, if there is an arc $(v_i,v_j)$ in $D$, then we add two arcs $(v_i,v_j)$ and $(v_i,v_j')$ in $G'_{D}$ (note that every vertex $v$ of $D$ is corresponding to the directed path $v,v'$ in $G'_{D}$). Now, we define $G_{D}$ as the underlying graph of $G'_{D}$.
\end{rem}

By using the remark above, we present the following lemma which might be useful in its own. We make use of the notation used in the definition of $G_D$.

\begin{lemma}\label{A}
For any digraph $D$ of order $n$,
\begin{enumerate}[{\rm (i)}]
  \item $\gamma(G_{D})=\gamma(D)$,
  \item $\rho(G_{D})=\rho(D)$.
\end{enumerate}
\end{lemma}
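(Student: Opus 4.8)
The plan is to establish a correspondence between dominating sets (resp. packing sets) of $D$ and those of $G_D$, and to argue that the correspondence preserves (or can be adjusted to preserve) cardinality. Recall the structure of $G_D$: each vertex $v$ of $D$ gives rise to a path $v\, v'$ in $G_D$, the vertex $v'$ being a leaf of $G_D$ whose only neighbor is $v$; and each arc $(v_i,v_j)$ of $D$ becomes the two edges $v_iv_j$ and $v_iv_j'$ in $G_D$. So in $G_D$, the neighborhood of an ``original'' vertex $v_j$ is $N_{G_D}(v_j)=\{v_j'\}\cup\{v_i : (v_i,v_j)\in A(D)\}\cup\{v_k : (v_j,v_k)\in A(D)\}$, i.e. it sees its own copy, its in-neighbors in $D$, and its out-neighbors in $D$; while the copy $v_j'$ has $N_{G_D}(v_j')=\{v_j\}\cup\{v_i:(v_i,v_j)\in A(D)\}=N_D^-[v_j]$.

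For part (i), first I would show $\gamma(G_D)\le\gamma(D)$: given a dominating set $S$ of $D$, I claim $S$ itself dominates $G_D$. Indeed every original $v_j\notin S$ has an in-neighbor in $S$ in $D$, hence a neighbor in $S$ in $G_D$; and every copy $v_j'$ is dominated because either $v_j\in S$ (then $v_j\in N_{G_D}(v_j')$) or $v_j$ has an in-neighbor $v_i\in S$, and $v_i\in N_{G_D}(v_j')$. For the reverse inequality $\gamma(D)\le\gamma(G_D)$, start from a $\gamma(G_D)$-set $S'$ and massage it: whenever $v_j'\in S'$, replace it by $v_j$ (this cannot hurt, since $N_{G_D}(v_j')\subseteq N_{G_D}[v_j]$, so $v_j$ dominates everything $v_j'$ did, plus $v_j'$ itself); after these swaps we may assume $S'\subseteq V(D)$. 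Then I would check $S'$ dominates $D$: if $v_j\notin S'$, then $v_j'$ must be dominated in $G_D$ by some vertex of $S'$, which lies in $N_{G_D}(v_j')\setminus\{v_j'\}=N_D^-[v_j]\setminus\{v_j'\}\subseteq N_D^-[v_j]$; since $v_j\notin S'$ this dominator is an in-neighbor of $v_j$ in $D$. Hence $\gamma(D)\le|S'|=\gamma(G_D)$.

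For part (ii), I would argue $\rho(D)\le\rho(G_D)$ by showing that a packing set $B$ of $D$ is also a packing set of $G_D$: using the equivalent ``closed-out-neighborhood'' form of Remark is not needed here; instead use the original definition, $N_D^-[u]\cap N_D^-[v]=\emptyset$ for distinct $u,v\in B$. In $G_D$ one computes $N_{G_D}[v_j]=N_D^-[v_j]\cup N_D^+[v_j]\cup\{v_j'\}$ for an original vertex; two distinct original vertices $u,v\in B$ are non-adjacent in $G_D$ (an edge $v_iv_j$ or $v_iv_j'$ only arises from an arc, but $B$ being a packing forbids arcs between its vertices) and the only way $N_{G_D}[u]\cap N_{G_D}[v]\ne\emptyset$ would be a common neighbor $w$; chasing cases ($w$ original or a copy) each forces a common in-neighbor in $D$ or an arc inside $B$, contradicting that $B$ is a packing. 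For the reverse $\rho(G_D)\le\rho(D)$, take a $\rho(G_D)$-set $B'$ and first normalize it to avoid the copy vertices: if $v_j'\in B'$ then $v_j\notin B'$ (adjacent), and I would swap $v_j'$ for $v_j$; the delicate point is to verify this swap keeps $B'$ a packing in $G_D$ — one must check $v_j$ has no closed-neighborhood collision with the other members, using that $v_j'$ didn't. Once $B'\subseteq V(D)$, it is a packing of $D$: distinct $u,v\in B'$ have $N_{G_D}[u]\cap N_{G_D}[v]=\emptyset$, and restricting to in-neighborhoods gives $N_D^-[u]\cap N_D^-[v]=\emptyset$ (and in particular no arc between them).

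The main obstacle I anticipate is the normalization step in part (ii): the swap $v_j'\mapsto v_j$ preserves domination trivially (part (i)) but for packing one must ensure that moving to the larger closed neighborhood $N_{G_D}[v_j]\supsetneq N_{G_D}[v_j']$ does not create an overlap with some other element of $B'$. The resolution is to observe that if $v_j$ collided with some $w\in B'$, then since $v_j'$ is a leaf adjacent only to $v_j$ and to things in $N_D^-[v_j]$, the extra neighbors of $v_j$ beyond those of $v_j'$ are exactly $v_j$'s out-neighbors in $D$, and any such out-neighbor being close to $w\in B'$ would already have forced a violation of the packing property of $B'$ at a copy vertex or via an arc — so no new collision occurs. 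I would also remark that both claimed equalities are consistent with, and in fact slicker to see via, the equivalent closed-out-neighborhood formulation of packings given just above, but the in-neighborhood definition is the most direct for these neighborhood computations.
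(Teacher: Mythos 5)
Your part (i) is correct and follows essentially the same route as the paper (use the original vertices of a $\gamma(D)$-set to dominate $G_D$; normalize a $\gamma(G_D)$-set by replacing each $v'$ with $v$, which is safe since $N_{G_D}(v')\subseteq N_{G_D}[v]$).

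Part (ii), however, has a genuine gap, and it starts with your computation of $N_{G_D}(v_j)$: since each arc $(v_j,v_k)$ of $D$ contributes \emph{both} edges $v_jv_k$ and $v_jv_k'$ to $G_D$, the neighborhood of an original vertex $v_j$ also contains the copies $v_k'$ of its out-neighbors, and in particular $N_{G_D}[v_j]\supseteq N_D^+[v_j]$. This breaks your forward direction: a packing $B$ of $D$ need not be a packing of $G_D$ when embedded as original vertices. Concretely, let $D$ have vertices $u,v,w$ and arcs $(u,w),(v,w)$. Then $N_D^-[u]=\{u\}$ and $N_D^-[v]=\{v\}$ are disjoint, so $\{u,v\}$ is a packing of $D$; but in $G_D$ the vertex $w$ (and also $w'$) lies in $N_{G_D}[u]\cap N_{G_D}[v]$, so $\{u,v\}$ is not a packing of $G_D$. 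Your case analysis asserts that a common neighbor forces a common in-neighbor or an arc inside $B$, but a common \emph{out}-neighbor in $D$ is permitted by the packing condition $N_D^-[u]\cap N_D^-[v]=\emptyset$ and still produces a common neighbor in $G_D$. The paper avoids this by mapping $B=\{v_1,\dots,v_{|B|}\}$ to the set of \emph{copies} $B'=\{v_1',\dots,v_{|B|}'\}$, whose closed neighborhoods in $G_D$ are exactly $\{v_i'\}\cup N_D^-[v_i]$ and hence pairwise disjoint. The same example shows your reverse normalization is oriented the wrong way: $\{u',v'\}$ is a packing of $G_D$, but swapping copies for originals yields $\{u,v\}$, which is not; your proposed resolution of the ``delicate point'' fails precisely because the new overlap at $w$ was never visible to the leaf $v'$. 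The correct normalization (as in the paper) goes in the opposite direction, replacing each original $v\in Q$ by $v'$ — safe because $N_{G_D}[v']\subseteq N_{G_D}[v]$ — after which the set $R=\{v: v'\in Q\}$ is read off and checked to be a packing of $D$. Your final deduction ``restricting to in-neighborhoods gives $N_D^-[u]\cap N_D^-[v]=\emptyset$'' would be fine once a correct normalization is in place, but as written the two key steps of part (ii) do not go through.
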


\begin{proof}
(i) Let $S=\{v_{1},\dots,v_{|S|}\}$ be a $\gamma(D)$-set and consider the set $S'$ in $G_D$ formed by the corresponding vertices of $S$. Let $x\in V(G_D)-S'$. Clearly, if $x=v_i'$ whether $v_i\in S$, then $x$ is dominated by $v_{i}\in S'$. Suppose now that $x\ne v'_i$ for any $i\in\{1,\dots,|S|\}$. Since there exists an arc $(v_{i},y)$ for every $y\notin S$, there exist edges between $v_i$ and $y$, and between $v_i$ and $y'$. In this sense, there must exist a vertex $v_j\in S'$ for some $j\in\{1,\dots,|S|\}$ such that $x$ is dominated by $v_j$. Thus, $S'$ is a dominating set in $G_D$ and so, $\gamma(G_{D})\le \gamma(D)$.

Suppose now that $P$ is a $\gamma(G_{D})$-set. Taking into account that any vertex $v$ of $G_{D}$ dominates any vertex dominated by $v'$, we may assume that $P$ does not contain vertices of $v'$ style. If this is the case, we simply replace each $v'$ with $v$ or remove $v'$ if $v,v'\in P$ (which is indeed not possible since $P$ is a $\gamma(G_{D})$-set). Hence, let $P=\{v_{1},\dots,v_{|P|}\}$ and let $P'$ be the set of vertices of $D$ corresponding to the vertices in $P$. Let $x\in V(D)\setminus P'$. Note that this means the corresponding vertices $x,x'$ of $G_D$ are not in $P$. So, there are two edges $v_{j}x$ and $v_{j}x'$ in $G_{D}$ for some $j\in\{1,\dots,|P|\}$, implying that $(v_{j},x)\in A(D)$. Therefore, $P'$ is a dominating set in $D$ and consequently, $\gamma(D)\leq|P|=\gamma(G_{D})$.

(ii) Let $B=\{v_{1},\dots,v_{|B|}\}$ be a $\rho(D)$-set. We claim that $B'=\{v'_{1},\dots,v'_{|B|}\}$ is a packing in $G_{D}$. It is easy to see that $B'$ is independent. Now, suppose that there is a vertex $x\in V(G_{D})$ having two distinct neighbors $v'_{i},v'_{j}\in B'$ (notice that such $x$ must be a vertex with $v$ style, not $v'$ style). Hence, $\{v_{i},v_{j}\}\subseteq N^+[x]\cap B$, a contradiction. Therefore, $B'$ is a packing in $G_{D}$ and so, $\rho(G_{D})\geq|B|=\rho(D)$.

On the other hand, suppose that $Q$ is a $\rho(G_{D})$-set. First, note that for every vertex $v\in Q$, $(Q\setminus\{v\})\cup\{v'\}$ is a $\rho(G_{D})$-set as well. So, we may assume that $Q$ does not contain vertices of $v$ style, since it cannot also happen that $v,v'\in Q$. Consider now $R=\{v\in V(D)\ |\ v'\in Q\}$. If there is an arc $(x,y)$ in $D$ for some $x,y\in R$, then $\{x',y'\}\subseteq N[x]\cap Q$, a contradiction. So, $R$ is independent. If there is vertex $z\in V(D)\setminus R$ having two out-neighbors $v_i,v_j\in R$, then $\{v'_i,v'_j\}\subseteq N[z]\cap Q$, which is again a contradiction. Therefore, $R$ is a packing in $D$ and so, $\rho(G_{D})=|R|\leq \rho(D)$. This completes the proof.
\end{proof}

It is well known that the inequality $\rho(G)\leq \gamma(G)$ holds for any graph $G$ (\cite{hhs2}). This fact together with Lemma \ref{A} (and using the construction $G_D$ made in Remark \ref{SM}) lead to the following immediate consequence.

\begin{rem}\label{EQ2}
For any digraph $D$, $\rho(D)=\rho(G_{D})\leq \gamma(G_{D})=\gamma(D).$
\end{rem}

We note that the difference between these two digraph parameters can be arbitrary large. For instance, as a well-known result, there exist tournaments with arbitrary large domination number (see \cite{e}), while $\rho(D)=1$ for each tournament $D$.

We now center our attention in directed trees. In connection with this (for the non-directed case), Meir and Moon \cite{mm} showed that $\rho(T)=\gamma(T)$, for all trees $T$ and, in a more general case, the following result due to Farber \cite{f1} (see also \cite{cn}) is known.

\begin{lemma}\label{B}
If a graph $G$ is strongly chordal, then $\rho(G)=\gamma(G)$.
\end{lemma}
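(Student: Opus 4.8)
The plan is to derive the equality from a greedy sweep along a suitable vertex ordering, together with the universally valid inequality $\rho(G)\le\gamma(G)$ recalled above. First I would invoke Farber's characterization of strongly chordal graphs \cite{f1}: $G$ is strongly chordal if and only if $V(G)$ admits an ordering $v_1,\dots,v_n$ (a \emph{strong elimination ordering}) whose closed-neighbourhood matrix is $\Gamma$-free, that is, there are no indices $i<i'$ and $k<j$ with $v_k,v_j\in N[v_i]$, $v_k\in N[v_{i'}]$ and $v_j\notin N[v_{i'}]$. Fix such an ordering.

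Next I would run the following greedy procedure producing, simultaneously, a dominating set $S$ and a packing $B$. Start with $S=B=\emptyset$; while some vertex is not dominated by $S$, let $v_i$ be the smallest-indexed vertex not dominated by $S$, add $v_i$ to $B$, and add to $S$ the vertex $v_j$ of largest index in $N[v_i]$. Since the procedure stops only when every vertex is dominated, the final set $S$ is a dominating set of $G$. Each iteration enlarges both $B$ and $S$ by exactly one vertex: the triggering vertices $v_i$ are pairwise distinct (indeed their indices strictly increase, since once $v_i$ becomes dominated it stays dominated and all smaller-indexed vertices were already dominated when $v_i$ was chosen), and each newly added $v_j$ was not yet in $S$, because $v_j\in S$ would force $v_i\in N[v_j]$ to be already dominated. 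Hence $|B|=|S|$.

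The crux is to prove that $B$ is a packing, and this is the step I expect to be the main obstacle, as it is precisely where strong chordality enters through the $\Gamma$-free property. Suppose, for contradiction, that two distinct vertices $v_i,v_{i'}\in B$ with $i<i'$ share a vertex $v_k\in N[v_i]\cap N[v_{i'}]$. Let $v_j$ be the vertex of largest index in $N[v_i]$, the one added to $S$ when $v_i$ was processed; then $k\le j$. When $v_{i'}$ was later selected it was undominated, hence $v_{i'}\notin N[v_j]$, i.e. $v_j\notin N[v_{i'}]$; in particular $v_k\neq v_j$, so $k<j$. Now the indices $i<i'$ and $k<j$ satisfy $v_k,v_j\in N[v_i]$, $v_k\in N[v_{i'}]$ and $v_j\notin N[v_{i'}]$, contradicting that the chosen ordering is $\Gamma$-free. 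Therefore no such $v_k$ exists, so $N[v_i]\cap N[v_{i'}]=\emptyset$ for all distinct $v_i,v_{i'}\in B$, meaning $B$ is a packing. Combining the three facts, $\gamma(G)\le|S|=|B|\le\rho(G)\le\gamma(G)$, and thus $\rho(G)=\gamma(G)$.

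As an alternative route, one could argue via linear programming duality: $\gamma(G)$ and $\rho(G)$ are the optimal values of a pair of dual integer programs whose constraint matrix is the closed-neighbourhood matrix of $G$, which for strongly chordal graphs is totally balanced; hence both relaxations admit integral optimal solutions and LP duality forces equality. I would prefer the constructive argument above, since it also exhibits an optimal dominating set and an optimal packing simultaneously.
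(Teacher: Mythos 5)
Your proof is correct. Note, however, that the paper does not prove Lemma \ref{B} at all: it is quoted as a known result of Farber \cite{f1} (see also \cite{cn}), so there is no in-paper argument to compare against. What you have written is essentially a faithful reconstruction of Farber's original proof---the greedy sweep along a strong elimination ordering that simultaneously builds a dominating set $S$ and a packing $B$ with $|S|=|B|$, with the $\Gamma$-free property of the closed-neighbourhood matrix doing exactly the work you identify in ruling out a common closed neighbour of two packing vertices---and the totally balanced matrix/LP-duality alternative you mention is the other standard route to the same equality.
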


We are now aimed to present the following theorem, which can be considered as a directed version of the classic result of Meir and Moon \cite{mm}.

\begin{theorem}\label{T2}
If $T$ is a directed tree, then $\rho(T)=\gamma(T)$.
\end{theorem}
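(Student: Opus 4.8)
The plan is to leverage the infrastructure already built in Lemma~\ref{A} and Lemma~\ref{B}, reducing the directed statement to a structural claim about the underlying graph $G_T$. Recall that Remark~\ref{EQ2} already gives $\rho(T)\le\gamma(T)$ for every digraph, so the only thing that needs proof is the reverse inequality $\gamma(T)\le\rho(T)$. By Lemma~\ref{A} we have $\gamma(T)=\gamma(G_T)$ and $\rho(T)=\rho(G_T)$, so it suffices to show $\gamma(G_T)\le\rho(G_T)$. In view of Lemma~\ref{B} (Farber's theorem), this in turn follows immediately once we establish that $G_T$ is strongly chordal. Thus the entire theorem collapses to the single claim: \emph{if $T$ is a directed tree, then $G_T$ is strongly chordal.}

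To prove that claim I would first argue that $G_T$ is chordal, preferably via Dirac's characterization (Theorem~\ref{Dirac}), i.e.\ by exhibiting a simplicial elimination ordering. The graph $G_T$ is obtained from $T$ by attaching, to each vertex $v$, a pendant copy $v'$ joined to $v$, and by further joining $v'$ to every out-neighbor of $v$ in $T$; additionally the underlying edges of $T$ itself are present. A natural elimination order is: first delete all the primed vertices $v'$ (each $v'$ has neighborhood $\{v\}\cup N^+_T(v)$, and I must check this set induces a clique in $G_T$), and then delete the original vertices in an order compatible with the tree structure, say from the leaves of $T$ upward toward the root. The key local verification is that when we delete a primed vertex $v'$, its neighbors $v$ and the out-neighbors $w_1,\dots,w_k$ of $v$ are pairwise adjacent in $G_T$: $v$ is adjacent to each $w_i$ because $(v,w_i)\in A(T)$ gives the edge $vw_i$ in $G_T$, but $w_i$ and $w_j$ need not be adjacent in $G_T$. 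This is the point where the particular construction of $G_T$ matters — I expect that the correct simplicial order deletes the primed vertices in a more refined way, or that one checks $G_T$ is chordal directly by showing every cycle of length $\ge 4$ has a chord, using that $T$ (being a tree) has no cycles at all, so every cycle in $G_T$ must pass through primed vertices, which are essentially ``local'' and force short chords.

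Once chordality is in hand, the remaining task is to rule out induced $k$-suns in $G_T$. A $k$-sun contains an induced cycle $v_1,\dots,v_k$ (for $k\ge 3$) on which a Hamilton cycle exists in the ``inner'' graph, together with $k$ simplicial vertices attached in the prescribed pattern. I would argue that any cycle in $G_T$, lacking support from cycles in $T$, must be of a very restricted form — at most length $3$ or $4$ and built from a few primed vertices — so there is simply no room for the $2k$-vertex sun configuration with the required adjacency pattern when $k\ge 3$. Concretely, since $T$ is acyclic, every cycle of $G_T$ uses at least two primed vertices, and primed vertices have small, star-like neighborhoods, which sharply limits the girth and the clique structure; a short case analysis should show no induced $k$-sun survives.

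The main obstacle I anticipate is the bookkeeping in verifying strong chordality of $G_T$ cleanly — in particular pinning down the right simplicial elimination ordering and confirming the clique condition at each step, and then the sun-free argument. It is conceivable the authors instead give a direct inductive proof on $|V(T)|$ (remove a leaf of $T$ together with its primed copy, or root $T$ and peel from the leaves), constructing a packing and a dominating set of equal size simultaneously; that would bypass Lemma~\ref{B} entirely. Either route works, but the route through $G_T$ being strongly chordal is the shortest given the lemmas already set up, so that is the plan I would pursue, with the induction as a fallback if the strong-chordality verification gets unwieldy.
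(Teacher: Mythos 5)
Your overall strategy is exactly the paper's: reduce via Lemma~\ref{A} and Lemma~\ref{B} to the single claim that $G_T$ is strongly chordal. But as written the proposal does not establish that claim; it only sketches two attempts and flags both as unresolved, so the crucial step is a genuine gap. First, you misread the construction of Remark~\ref{SM}: an arc $(v_i,v_j)$ of $D$ produces the arcs $(v_i,v_j)$ and $(v_i,v_j')$ in $G'_D$, so in $G_T$ the primed vertex $v'$ is adjacent to $v$ and to the \emph{in}-neighbors of $v$, not to its out-neighbors. This does not rescue your proposed ordering: a directed tree may have a vertex $v$ with two or more in-neighbors, and these are pairwise non-adjacent in $G_T$, so $v'$ is not simplicial and the plan ``delete all primed vertices first'' fails --- as you yourself suspected, but you leave the repair unspecified. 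The paper's fix is an induction on $n$: pick a vertex $u$ of degree one in the underlying tree (so exactly one of $\deg^-(u)=1$ or $\deg^+(u)=1$ holds), take by induction a simplicial elimination ordering of $G_{T-u}$, and prepend $u',u$ to it; in either case one checks directly that $u'$ and then $u$ are simplicial in the respective remaining graphs.

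Second, your sun-freeness argument is only an expectation (``a short case analysis should show\dots''), and the quantitative claim behind it is not pinned down. The paper makes it precise by proving that $G_T$ contains \emph{no cycle of length at least $6$} whatsoever: any such cycle cannot lie entirely in $\{v_1,\dots,v_n\}$ (that would give a cycle in the underlying tree), cannot contain two consecutive primed vertices (they form an independent set), and a case analysis on how many unprimed vertices it contains always produces either a cycle or two distinct paths between the same pair of vertices in the underlying tree --- a contradiction. Since a $k$-sun contains a cycle of length $2k\ge 6$, $G_T$ is $k$-sun free for all $k\ge3$, hence strongly chordal, and the theorem follows from Lemmas~\ref{A} and~\ref{B} as you intended. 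So your reduction is right and matches the paper, but both verifications that carry the proof --- the correct simplicial elimination ordering and the exclusion of long cycles --- still need to be supplied.
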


\begin{proof}
Let $T$ be a directed tree with $V(T)=\{v_{1},\dots,v_{n}\}$. In order to complete our proof we use the structure $G_T$ defined in Remark \ref{SM}. We first show that $G_{T}$ has a simplicial elimination ordering (or equivalently $G_T$ is a chordal graph according to Theorem \ref{Dirac}). We employ induction on the order $n$ of $T$. It is clearly obvious for $n=1$. We suppose now that it is true for any directed tree of order $n-1$ and shall consider a directed tree of order $n$. Let $u$ be a vertex of $T$ of degree one. Applying the inductive hypothesis to $T-u$, we obtain a simplicial elimination ordering $u_{2n-2},\dots,u_{1}$ for $G_{T-u}$. It is then readily seen that either $\mbox{deg}^{-}(u)=1$ or $\mbox{deg}^{+}(u)=1$ would occur, we always obtain a simplicial elimination ordering $u',u,u_{2n-2},\dots,u_{1}$ for $G_{T}$.

We next claim that $G_{T}$ has no cycles of length $l\geq 6$. For the contrary, suppose $C$ is a cycle of length at least six in $G_{T}$. If $V(C)\subseteq\{v_{1},\dots,v_{n}\}$, then the vertices in $V(C)$ are on a cycle in the undirected underlying tree $T'$ of $T$, which is a contradiction. So, $V(C)$ must contain vertices in the style $v'_{i}$, for some $1\leq i\leq n$. Moreover, since the vertices $v'_{1},\dots,v'_{n}$ are independent, there cannot be two consecutive vertices in $C$ of the $v'_i$ style. Let $P:v_{x},v'_{a},v_{y}$ be a path on $C$. Clearly, $|V(C)\setminus\{v_{x},v'_{a},v_{y}\}|\geq 3$. We consider the following situations.

\noindent
\textbf{Case 1:} $|(V(C)\setminus\{v_{x},v'_{a},v_{y}\})\cap\{v_{1},\dots,v_{n}\}|=1$. Assume  $v_{b}$ is the only member of $(V(C)\setminus\{v_{x},v'_{a},v_{y}\})\cap\{v_{1},\dots,v_{n}\}$. Hence, by taking into account the fact that the vertices $v'_{i}$ are independent, it must happen that the adjacency in the vertices on $C$ follows the order $v_{x},v'_{a},v_{y},v'_{c},v_{b},v'_{d},v_x$ for some different vertices $v'_c,v'_d$. By observing the adjacency conditions of $G_T$, we can deduce there exists a cycle in $T'$ with vertices in $\{v_{x},v_{a},v_{y},v_{c},v_{b},v_{d},v_x\}$ which is not possible.

\noindent
\textbf{Case 2:} $|(V(C)\setminus\{v_{x},v'_{a},v_{y}\})\cap\{v_{1},\dots,v_{n}\}|\geq 2$. Let $v_{f}$ and $v_{g}$ be two distinct vertices in $(V(C)\setminus\{v_{x},v'_{a},v_{y}\})\cap\{v_{1},\dots,v_{n}\}$. It is not difficult to see that there exists a $v_{x},v_{y}$-path $P'$ in $T'$ in which $v_{f}$ and $v_{g}$ (at least one of them is different from $v_{a}$) appear. So, there are two different $v_{x},v_{y}$-paths in $T'$: $P'$ and $P''=v_{x},v_{a},v_{y}$, which is a contradiction.

As a consequence of the two cases above, we obtain that $G_{T}$ has no cycle of length at least six. Now, if $G_{T}$ has a $k$-sun $H$ as an induced subgraph, then the vertices of $H$ would be on a cycle of length $l=2k\geq 6$, which is a contradiction. Thus, $G_{T}$ is $k$-sun free, for each $k\geq 3$ and, consequently with the reasoning till this moment, we claim that $G_{T}$ is a strongly chordal graph. Therefore, by Lemma \ref{A} and Lemma \ref{B}, we have $\rho(T)=\rho(G_{T})=\gamma(G_{T})=\gamma(T),$ which completes the proof.
\end{proof}

We turn now our attention to rooted trees and let $v$ be the root of a rooted tree $T$. We construct a set $B\subseteq V(T)$ by the following process. Suppose that $T=T_{1}$ and select a leaf $v_{1}$ with maximum distance from $v$ as a member of $B$. Let $u_{1}$ be the support vertex adjacent to $v_{1}$ and let $T_{2}=T_{1}-N^+[u_{1}]$. Iterate this process, in which we always chose as a member of $B$, a leaf $v_i$ of the rooted tree $T_i=T_{i-1}-N^+[u_{i-1}]$ at a maximum distance from $v$. We end the process whether we have removed all vertices or get an isolated vertex, in which case we put such isolated vertex into $B$. Let $B=\{v_{1},\dots, v_{|B|}\}$ (notice that $v_{|B|}$ is either $v$ or one of its out-neighbors). From the above procedure, it is readily seen that $B$ is a packing in $T$. Thus, $|B|\leq \rho(T)$.

On the other hand, we consider the following situations.
\begin{itemize}
  \item If the process described above ends with all the vertices removed, then we note that $\mathbb{P}_1=\{N_{T_1}^+[u_{1}], \dots, N_{T_{|B|}}^+[u_{|B|}]\}$ is a partition of $V(T)$ where $u_1,\dots,u_{|B|}$ are the support vertices of $v_1,\dots,v_{|B|}$ in $T_1,\dots, T_{|B|}$, respectively.
  \item If the process described above ends with an isolated vertex as the rooted tree $T_{|B|}$, then we again observe that $\mathbb{P}_2=\{N_{T_1}^+[u_{1}], \dots, N_{T_{|B|-1}}^+[u_{|B|-1}],\{v\}\}$ is a partition of $V(T)$ where $u_1,\dots,u_{|B|-1}$ are the support vertices of $v_1,\dots,v_{|B|-1}$ in $T_1,\dots, T_{|B|-1}$, respectively.
\end{itemize}
Since both partitions $\mathbb{P}_1$ and $\mathbb{P}_2$ have the same cardinality, we may assume $\mathbb{P}=\{P_1,\dots, P_{|B|}\}$ is a partition of $V(T)$ given in one of the above ways, as the situation would correspond. If $Q$ is a $\rho(T)$-set, then it is clearly satisfied that $|Q\cap P_i|\leq 1$, for all $1\leq i\leq |B|$. Therefore, $\rho(T)=|Q|=\sum^{|B|}_{i=1}|Q\cap P_i|\leq |B|,$ which leads to $\rho(T)=|B|$.

In the process showen above we recursively eliminate a ``special directed star'' in each step, and obtain a sequence $(T=T_{1},\dots,T_{p-1},T_{p})$, in which $T_{1},\dots,T_{p-1}$ are rooted trees and $T_{p}$ is empty or just an isolated vertex. From now on we call it a {\em recursive directed star elimination sequence} (briefly RDSES). According to the process, for any rooted tree $T$ we are able to provide an algorithm for finding a $\rho(T)$-set. In other sense, it can be also considered as an algorithm for computing the domination number of $T$, by Theorem \ref{T2}. The algorithm, which is next stated, uses the so-called Breadth-First Search (BFS for short) algorithm (\cite{moore,zuze}) for traversing the vertices of the underling tree of the rooted tree $T$. In the algorithm, given a vertex $x$, by $p(x)$ we mean the parent of $x$, and by $Ch(x)$, the children of $x$ (see \cite{we}).

\begin{algorithm}
\caption{Maximum packing set}\label{algor}
\vspace*{0.2cm}
Input: A rooted tree $T$ of order $n\geq 2$ with root $u$\\
Output: a $\rho(T)$-set
\begin{algorithmic}
\STATE $B=\emptyset$
\STATE order $V(T)$ by BFS algorithm
\STATE $L$ is the list of vertices ordered with respect to the BFS-ordering
\WHILE{$|L|\ge 1$}
\STATE take last vertex $v\in L$
\STATE add $v$ to $B$
\STATE remove $p(v)$ and $Ch(p(v))$ from $L$ (note that $v\in Ch(p(v))$)
\ENDWHILE
\end{algorithmic}
\end{algorithm}

Since it is well known that the BFS algorithm runs in linear time for trees, and according to the previously described process, it is easy to check that the Algorithm \ref{algor} runs in polynomial-time on the order of the rooted tree $T$. On the other hand, note that the problem of finding a minimum dominating set (MDS) in strongly chordal graphs is polynomial-time solvable (see \cite{f}). Since $G_{T}$ is strongly chordal (as shown in the proof of Theorem \ref{T2}), the equality $\gamma(T)=\gamma(G_{T})$ reduces the problem of finding an MDS in directed trees to the problem of finding an MDS in strongly chordal graphs. So, the problem for directed trees is polynomial-time solvable, as well.

In what follows, we bound $\rho(T)$ on a rooted tree $T$ from below and above. They can be considered as bounds on $\gamma(T)$ in view of Theorem \ref{T2}. To this end, we need a result proven by Lee in \cite{l1}, which state that that for any rooted tree of order $n$,
\begin{equation}\label{EQ6}
\gamma(T)\leq \left\lceil\frac{n}{2}\right\rceil.
\end{equation}

\begin{theorem}\label{T1}
Let $T$ be a rooted tree of order $n\geq2$ with $\ell$ leaves and $s$ support vertices. Then,
$$s\le \rho(T)\leq\left\lceil\frac{n-\ell+s}{2}\right\rceil.$$
\end{theorem}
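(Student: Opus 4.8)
The plan is to establish the two inequalities separately. For the lower bound $s \le \rho(T)$, the key observation is that the $s$ support vertices of $T$, call them $w_1,\dots,w_s$, form a packing. Indeed, in a rooted tree every non-root vertex has in-degree one, so for distinct support vertices $w_i,w_j$ the closed out-neighborhoods $N^+[w_i]$ and $N^+[w_j]$ can meet only if one support vertex is an out-neighbor of the other or they share an out-neighbor. A shared out-neighbor is impossible because that vertex would have in-degree at least two; and if $w_j \in N^+(w_i)$ then $w_j$ is not a leaf but has in-neighbor $w_i$, which is fine — but then we must check $w_i \notin N^+[w_j]$ and that no third vertex sees both. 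Actually the cleanest route is: take for each support vertex $w_i$ one leaf child $\ell_i$, and argue the set $\{\ell_1,\dots,\ell_s\}$ is a packing since each $\ell_i$ has $w_i$ as its unique in-neighbor, the $w_i$ are distinct, and a leaf has out-degree zero; so $|N^+[\ell_i] \cap \{\ell_1,\dots,\ell_s\}| = |\{\ell_i\} \cap \{\ell_1,\dots,\ell_s\}| \le 1$ trivially, while for any other vertex $v$, $N^+[v]$ contains at most one $\ell_i$ because the $\ell_i$ have pairwise disjoint in-neighborhoods (namely $\{w_i\}$) and $v \ne \ell_i$ forces $\ell_i \in N^+(v)$, i.e. $v = w_i$. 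This gives $\rho(T) \ge s$.

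For the upper bound, I would invoke Theorem~\ref{T2} to replace $\rho(T)$ with $\gamma(T)$, and then exploit inequality~\eqref{EQ6}, $\gamma(T') \le \lceil |V(T')|/2 \rceil$, applied not to $T$ itself but to a reduced tree. The idea: delete from $T$ one leaf at each support vertex — remove leaves $\ell_1,\dots,\ell_s$ as above — obtaining a rooted forest (or tree, after minor care) $T^*$ on $n - s$ vertices. Now $\gamma(T) \le \gamma(T^*) + s$ is false in general, so instead I would build a dominating set of $T$ directly: put all $s$ support vertices into the dominating set $S$ (this dominates all $\ell$ leaves and the support vertices themselves), and then dominate the remaining $n - \ell - s$ vertices — those that are neither leaves nor support vertices — plus note the support vertices are already handled. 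The subdigraph $T$ restricted to $V(T) \setminus (L \cup W)$, where $L$ is the leaf set and $W$ the support set, together with the arcs needed to dominate it, forms a structure on $n - \ell - s$ vertices; adding a dominating set of size roughly $(n-\ell-s)/2$ of an auxiliary rooted tree on those vertices and combining with $W$ would give $\gamma(T) \le s + \lceil (n-\ell-s)/2 \rceil = \lceil (n-\ell+s)/2 \rceil$. Making this combination precise — specifying the auxiliary rooted tree on $V(T)\setminus L$, checking it is a valid rooted tree (each non-root vertex still has in-degree one after deleting leaves) of order $n-\ell$, and verifying that a $\gamma$-set of it of size $\le \lceil (n-\ell)/2\rceil$ plus handling support vertices yields the bound — is the delicate bookkeeping.

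The main obstacle I anticipate is the upper bound's combinatorial accounting: one must pick the auxiliary tree so that \eqref{EQ6} applies cleanly and so that the resulting dominating set genuinely dominates $T$ in the digraph sense (in-neighbor domination is directional). A promising clean version: let $T'$ be the rooted tree obtained from $T$ by deleting all $\ell$ leaves; then $T'$ has order $n - \ell$ and by \eqref{EQ6}, $\gamma(T') \le \lceil (n-\ell)/2 \rceil$, but this is not yet $\lceil (n-\ell+s)/2\rceil$ — so instead delete only leaves that are not themselves forced, or more carefully take $T'$ on $V(T) \setminus (L \setminus W')$ for a suitable subset, arranging the order to be exactly $n - \ell + s$. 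Concretely: from each support vertex remove all but... no — remove all leaf-children, but for each support vertex re-insert it as a "new leaf" is automatic since it remains. The count $n - \ell + s$ strongly suggests: contract each star (support vertex plus its leaf children) to a single vertex, yielding a rooted tree on $n - \ell$ vertices... which still gives $n-\ell$, not $n-\ell+s$. The correct reduction is likely: delete from $T$ all leaves, obtaining $T'$ of order $n-\ell$; a dominating set $D'$ of $T'$ with $|D'| \le \lceil(n-\ell)/2\rceil$ dominates everything except possibly some leaves of $T$, and the undominated leaves hang off support vertices not in $D'$ — at most $s$ such, so $\gamma(T) \le \lceil (n-\ell)/2 \rceil + s$, which overshoots. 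So the genuinely right move is subtler and is exactly where I expect to spend the effort: probably applying \eqref{EQ6} to a tree of order precisely $n - \ell + s$ obtained by keeping one leaf per support vertex and deleting the remaining $\ell - s$ leaves, then checking that dominating this tree dominates $T$ because every deleted leaf shared its (unique) support-vertex in-neighbor with a retained vertex.
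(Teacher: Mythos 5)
Your lower bound is correct and complete: the set formed by choosing one leaf child of each support vertex is a packing, for exactly the reasons you give (each such leaf has out-degree zero and in-degree one, and the chosen leaves have pairwise distinct in-neighbours, so no $N^{+}[v]$ can contain two of them). After several abandoned attempts, your upper bound also converges on the same reduction the paper uses: pass to the rooted tree $T'$ of order $n-\ell+s$ obtained by keeping exactly one leaf per support vertex and deleting the other $\ell-s$ leaves, and then apply Theorem~\ref{T2} and inequality \eqref{EQ6} to a tree of that order.

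The step by which you transfer the bound from $T'$ back to $T$, however, has a genuine gap. You propose to check ``that dominating this tree dominates $T$ because every deleted leaf shared its (unique) support-vertex in-neighbor with a retained vertex.'' That claim is false for an arbitrary dominating set of $T'$: such a set may dominate the retained leaf $\ell_u$ of a support vertex $u$ by containing $\ell_u$ itself rather than $u$, and then a deleted sibling of $\ell_u$ has no in-neighbour in the set. Concretely, take $T$ with arcs $(r,u)$, $(u,x)$, $(u,y)$; then $T'$ has arcs $(r,u)$, $(u,x)$, and $\{r,x\}$ is a minimum dominating set of $T'$ that fails to dominate $y$ in $T$. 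The inequality $\gamma(T)\le\gamma(T')$ is still true, but it requires a normalization you do not supply: in a minimum dominating set of $T'$, replace every non-root leaf by its unique in-neighbour (this preserves domination and does not increase the size), after which each retained leaf forces its support vertex into the set and all deleted leaves become dominated. The paper sidesteps this entirely by making the transfer at the level of packings rather than dominating sets: since all leaf children of a support vertex $u$ lie in $N^{+}[u]$, a packing of $T$ contains at most one leaf per support vertex, so after swapping leaves of the same support vertex one gets $\rho(T)\le\rho(T')$; Theorem~\ref{T2} is then applied to $T'$ (not to $T$) and \eqref{EQ6} finishes. Either repair is short, but as written your verification step would fail.
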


\begin{proof}
Let $B$ be a $\rho(T)$-set in $T$. It can be readily seen that there is one element in $B$ for each support vertex of $T$, which leads to the lower bound.

For any support vertex $u$ of $T$, we let $\ell(u)$ be the number of leaves adjacent from $u$. Let $T'$ be obtained from $T$ by deleting $\ell(u)-1$ leaves adjacent from $u$, for each support vertex $u$ of $T$. Clearly, $n'(T)=n-\ell+s$. Note that at most one leaf belongs to $B$, for each support vertex of $T$. Therefore, $\rho(T)=|B|=\rho(T')$. On the other hand, $\rho(T')=\gamma(T')$ by Theorem \ref{T2}. So, by using (\ref{EQ6}), we deduce
\begin{equation}\label{eq54}
\rho(T)=\gamma(T')\leq\lceil n(T')/2\rceil=\left\lceil(n-\ell+s)/2\right\rceil,
\end{equation}
which gives our desired upper bound.
\end{proof}

We now center our attention into characterizing all rooted trees attaining the bounds in Theorem \ref{T1}. In this sense, let $\Phi$ be the family of all rooted trees $T$ satisfying:
\begin{itemize}
  \item[{\rm(a)}] $V(T)$ has the partition $\mathbb{P}_{1}$ in which every member of it is isomorphic to $S_{2}$, or
  \item[{\rm(b)}] $V(T)$ has the partition $\mathbb{P}_{2}$ in which the directed stars $N_{T_i}^+[u_{i}]$ are isomorphic to $S_{2}$, or
  \item[{\rm(c)}] $V(T)$ has the partition $\mathbb{P}_{2}$ in which one of the directed stars $N_{T_i}^+[u_{i}]$ is isomorphic to $S_{3}$ and the others are isomorphic to $S_{2}$, for $1\leq i\leq|B|$.
\end{itemize}
This family was defined in \cite{ha} by using a different notation. It is also known from \cite{ha} that $\gamma(T)=\lceil n/2\rceil$ if and only if $T\in \Phi$. In our characterization, we use some notation defined in the proof of the upper bound in Theorem \ref{T1}. Namely, the rooted tree $T'$ obtained from a rooted tree $T$ by removing $\ell(u)-1$ leaves adjacent from $u$, for each support vertex $u$ of $T$.

\begin{theorem}\label{th-charact-rho-s}
Let $T$ be a rooted tree of order $n\geq2$ with $\ell$ leaves and $s$ support vertices. Then,
\begin{itemize}
  \item[{\rm (i)}] $\rho(T)=s$ if and only if either $n=\ell+s$, or $n\ne \ell+s$ and every non support vertex and non leaf of $T$ is adjacent from a support vertex of $T$.
  \item[{\rm (ii)}] $\rho(T)=\lceil(n-\ell+s)/2\rceil$ if and only if $T'\in \Phi$ $(T'$ is the rooted tree obtained from $T$ as previously described$)$.
\end{itemize}
\end{theorem}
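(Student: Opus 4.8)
The plan is to deduce both parts from the identity $\rho(T)=\gamma(T)$ of Theorem~\ref{T2}, the bound~(\ref{EQ6}), and the facts already isolated in the proof of Theorem~\ref{T1}, adding only a short argument on support vertices. For part~(i), the heart of the matter is to prove that $\rho(T)=s$ if and only if the set $U$ of support vertices of $T$ is a dominating set of $T$. For each support vertex $u$ fix a leaf child $\ell_u$ of $u$; the pairs $\{u,\ell_u\}$, $u\in U$, are pairwise disjoint (leaves are distinct, supports are distinct, and no support is a leaf), and every dominating set of $T$ must meet each of them, since $u$ is the unique in-neighbor of $\ell_u$. Hence $\gamma(T)\ge s$, and $\rho(T)\ge s$ by Theorem~\ref{T2}. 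If $U$ is a dominating set, then $\rho(T)\le\gamma(T)\le|U|=s$, which forces equality throughout. For the converse, suppose $U$ is not dominating, so some vertex $v\notin U$ has no in-neighbor in $U$; since each leaf has its support parent as its only in-neighbor, $v$ is a non-support non-leaf vertex, and either $v$ is the root or the parent $p$ of $v$ satisfies $p\notin U$. Then every dominating set $D$ of $T$ must contain $v$ (when $v$ is the root) or one of $v$ and $p$ (otherwise); in either case $D$ contains a vertex that is neither a support nor a leaf---recall that a parent is never a leaf---hence a vertex lying outside $\bigcup_{u\in U}\{u,\ell_u\}$. Adding it to the $s$ elements forced by the disjoint pairs yields $|D|\ge s+1$, so $\gamma(T)\ge s+1$ and $\rho(T)\ge s+1>s$.

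It remains, for part~(i), to translate ``$U$ is a dominating set of $T$'' into the condition of the statement. Every leaf has an in-neighbor in $U$ (namely its support parent) and every support vertex lies in $U$, so $U$ fails to dominate $T$ precisely when some non-support non-leaf vertex has no in-neighbor in $U$, that is, is not adjacent from a support vertex; in particular, a non-support non-leaf root automatically makes $U$ non-dominating, which fits the reading that such a root is adjacent from no vertex at all. Therefore $\rho(T)=s$ if and only if every non-support non-leaf vertex of $T$ is adjacent from a support vertex, and since leaves and support vertices are disjoint subsets of $V(T)$, there is no such vertex exactly when $n=\ell+s$. This is precisely the case split in the statement.

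For part~(ii), let $T'$ be the rooted tree obtained from $T$ by deleting $\ell(u)-1$ leaf children of every support vertex $u$, so that $n(T')=n-\ell+s\ge 2s\ge 2$ (using that leaves and support vertices are disjoint and $s\ge 1$). As shown in the proof of Theorem~\ref{T1}, $\rho(T)=\rho(T')$, and since $T'$ is again a directed tree, $\rho(T')=\gamma(T')$ by Theorem~\ref{T2}; combining this with~(\ref{EQ6}) gives
\[
\rho(T)=\gamma(T')\le\left\lceil\frac{n(T')}{2}\right\rceil=\left\lceil\frac{n-\ell+s}{2}\right\rceil,
\]
with equality if and only if $\gamma(T')=\lceil n(T')/2\rceil$. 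By the characterization recalled just before the theorem (due to~\cite{ha}), this last equality holds exactly when $T'\in\Phi$, which is the asserted equivalence.

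The argument is light, and the only step that requires genuine care is the converse direction of the key equivalence in part~(i): when $U$ is not dominating, one must exhibit a vertex forced into every dominating set of $T$ yet provably disjoint from all the pairs $\{u,\ell_u\}$, which entails handling the root case separately and using that the parent of a vertex is never a leaf. The remainder is bookkeeping built on Theorems~\ref{T2} and~\ref{T1} and the cited result of~\cite{ha}.
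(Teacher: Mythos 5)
Your proof is correct. Part (ii) is essentially the paper's own argument: equality forces equality throughout the chain (\ref{eq54}), and then the characterization of $\gamma(T')=\lceil n(T')/2\rceil$ from \cite{ha} finishes it. Part (i), however, takes a genuinely different route. The paper stays on the packing side: for sufficiency it runs the recursive directed-star elimination process behind Algorithm \ref{algor} and argues (somewhat informally) that a non-support, non-leaf vertex adjacent from a support vertex never adds an element to the constructed packing $B$, so $|B|=s$; for necessity it exhibits an explicit packing of size $s+1$, namely one leaf per support vertex together with the offending vertex $x$. You instead reformulate the condition as ``the set $U$ of support vertices is a dominating set'' and work entirely on the domination side through Theorem \ref{T2}: the $s$ pairwise disjoint pairs $\{u,\ell_u\}$ force $\gamma(T)\ge s$; $U$ dominating gives $\gamma(T)\le s$; and when $U$ fails to dominate you force into every dominating set an additional vertex lying outside all the pairs (handling the root separately, since it has no in-neighbor), giving $\gamma(T)\ge s+1$. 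Your sufficiency argument is tighter than the paper's, which leans on an informal reading of the elimination process; the paper's necessity argument is slightly more direct in that it produces the large packing explicitly rather than bounding $\gamma$ from below. Both are valid, and your observation that the two cases of the statement collapse into the single condition ``every non-support, non-leaf vertex is adjacent from a support vertex'' (vacuous when $n=\ell+s$) is a clean simplification.
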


\begin{proof}
(i) If $n=\ell+s$, then, by using Theorem \ref{T1}, we can easily notice that $\rho(T)=s$. Assume now that $n\ne \ell+s$ and that every non support and non leaf vertex of $T$ is adjacent from a support vertex of $T$. Let $x$ be a vertex which is not a support vertex nor a leaf. We make use now of the process presented previously to Algorithm \ref{algor} and the notation used there (specially the constructed set $B$). We first note that there must be an intermediate tree $T_i$, in which $x$ is a leaf of $T_i$ adjacent from a support vertex $x'$, which has at least one out-neighbor as a leaf (a leaf of $T$) other than $x$. Thus, in the process of adding vertices to $B$, the existence of such vertex $x$ does not influence on the number of vertices added to $B$, in correspondence with support vertices. In such case, we observe that $|B|=s$, and so, $\rho(T)=|B|=s$.

On the contrary, assume $\rho(T)=s$. Let $n\ne \ell+s$ and $x$ be a vertex which is neither a support vertex nor a leaf. Suppose that $x$ is not adjacent from a support vertex. So, the subset $B$ containing exactly one leaf for each support vertex along with the vertex $x$ is a packing set in $T$. So, $\rho(T)\geq|B|=s+1$, a contradiction. This completes the proof.

(ii) The equality, $\gamma(T)=\lceil(n-\ell+s)/2\rceil$ means equality in all inequalities given in the chain (\ref{eq54}). Thus, since $\gamma(T')=\lceil(n(T'))/2\rceil$ if and only if $T'\in \Phi$ (by using the results of \cite{ha}), we immediately obtain our result.
\end{proof}

The upper bounds in the next theorem were given by Lee (\cite{l1},\cite{l2}).  Note that the upper bound in Theorem \ref{T1} improves both of them.

\begin{theorem}
The following statements hold.
\begin{itemize}
  \item[{\rm(i)}] \emph{(\cite{l1})}  For any rooted tree of order $n$, $\gamma(T)\leq\lceil n/2\rceil$.
  \item[{\rm(ii)}] \emph{(\cite{l2})}  For any binary tree of order $n$, $\gamma(T)\leq(n-1)/2$.
\end{itemize}
\end{theorem}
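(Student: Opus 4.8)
The plan is to read off both statements from Theorem~\ref{T1} together with the equality $\gamma(T)=\rho(T)$ of Theorem~\ref{T2}; as the sentence preceding the theorem indicates, Theorem~\ref{T1} already refines both of Lee's bounds, so no new argument is needed beyond two elementary facts relating the parameters $n$, $\ell$ and $s$. Throughout, let $T$ be a rooted tree of order $n$ with $\ell$ leaves and $s$ support vertices.

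For part~(i), the first thing I would establish is the inequality $s\le\ell$. This holds because in a rooted tree every leaf has in-degree exactly $1$, so sending each leaf to its unique in-neighbour gives a well-defined map from the set of leaves to the set of support vertices, and this map is onto (by definition a support vertex is the in-neighbour of some leaf); hence $\ell\ge s$. Consequently $n-\ell+s\le n$, and then Theorems~\ref{T2} and~\ref{T1} give
\[
\gamma(T)=\rho(T)\le\left\lceil\frac{n-\ell+s}{2}\right\rceil\le\left\lceil\frac{n}{2}\right\rceil .
\]
(The case $n=1$, outside the range of Theorem~\ref{T1}, is trivial.)

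For part~(ii), I would first pin down the structure of a binary tree by an arc count. If $i$ denotes the number of non-leaf vertices of $T$, then each such vertex contributes out-degree $2$ and each leaf out-degree $0$, so $T$ has $2i$ arcs; since a rooted tree of order $n$ has $n-1$ arcs, we get $2i=n-1$, hence $i=(n-1)/2$, $\ell=n-i=(n+1)/2$, and $n$ is odd --- and $n\ge 3$, since we are invoking Theorem~\ref{T1}, which requires $n\ge 2$. Every support vertex is a non-leaf vertex, so $s\le i=\ell-1$ and therefore $n-\ell+s\le n-1=2\ell-2$. Feeding this into Theorem~\ref{T1} and again using $\gamma(T)=\rho(T)$,
\[
\gamma(T)=\rho(T)\le\left\lceil\frac{n-\ell+s}{2}\right\rceil\le\left\lceil\frac{2\ell-2}{2}\right\rceil=\ell-1=\frac{n-1}{2}.
\]

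There is no real obstacle: once Theorems~\ref{T2} and~\ref{T1} are available the argument is a short chain of inequalities. The only steps needing a little attention are the two structural facts --- getting the direction of the leaf-to-support assignment right (distinct leaves may share a support vertex, giving $s\le\ell$ and not the reverse) and the handshaking-type count $n=2\ell-1$ for binary trees --- together with the observation that $2\ell-2$ is even, which is precisely what makes (ii) sharper than the bound one would obtain by naively writing $\lceil(n-1)/2\rceil$.
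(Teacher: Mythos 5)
The paper offers no proof of this theorem at all: both items are stated as citations of Lee's results (\cite{l1}, \cite{l2}), and the surrounding text merely remarks that Theorem~\ref{T1} improves them. So the real question is whether your derivation is logically sound within the paper's framework, and for part~(i) it is not: the argument is circular. You deduce $\gamma(T)\le\lceil n/2\rceil$ from the upper bound of Theorem~\ref{T1}, but the paper's proof of that upper bound is itself an application of Lee's bound (\ref{EQ6}) --- which is precisely statement~(i) --- to the pruned tree $T'$. The chain (i) $\Rightarrow$ Theorem~\ref{T1} $\Rightarrow$ (i) establishes nothing, and it cannot be repaired by induction on the order, since $T'$ has order $n-\ell+s$, which equals $n$ whenever every support vertex of $T$ has exactly one leaf. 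An independent, non-circular route is already available in Section~\ref{sect-trees}: the RDSES construction preceding Algorithm~\ref{algor} produces a partition $\mathbb{P}$ of $V(T)$ into exactly $\rho(T)$ parts, each of cardinality at least two except possibly one singleton, whence $n\ge 2\rho(T)-1$, i.e. $\rho(T)\le\lceil n/2\rceil$, and Theorem~\ref{T2} then yields~(i).

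Your part~(ii) is correct as a consequence of Theorem~\ref{T1}: the arc count $2i=n-1$ forcing $n$ odd and $\ell=(n+1)/2$, the bound $s\le i=\ell-1$, and the parity observation that $\lceil(2\ell-2)/2\rceil=\ell-1$ are all in order, and this is exactly the sense in which the paper claims Theorem~\ref{T1} ``improves'' Lee's binary-tree bound. But note that this derivation still rests on Theorem~\ref{T1}, hence on statement~(i) as an external input; it is a derivation from the cited result rather than a self-contained proof of the theorem.
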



\section{General and connected contrafunctional digraphs}\label{sect-gen-contrafunc}

\ \ \ \ In this section, we first bound the packing number $\rho(D)$ of a general digraph $D$ from below. Note that $\delta=\delta(D)$ and $\Delta=\Delta(D)$ are the minimum and maximum degree among the vertices of the underlying graph of $D$, respectively.

\begin{theorem}
Let $D$ be a digraph of order $n$ and let $\delta^{*}=\delta^{*}(D)$ be the minimum in-degree taken over all vertices of minimum degree. Then,
$$\rho(D)\geq \frac{n+\Delta-\delta+(\Delta^{+}-1)(\Delta^{-}-\delta^{*})}{1+\Delta+\Delta^{-}(\Delta^{+}-1)},$$
and this bound is sharp.
\end{theorem}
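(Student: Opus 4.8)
The plan is to produce a packing set of the required size by a greedy absorption argument, and to keep careful track of how many vertices each chosen vertex "blocks" from future selection. Recall that $B$ is a packing precisely when every vertex $v$ satisfies $|N^{+}[v]\cap B|\le 1$; equivalently, for distinct $x,y\in B$ we need $N^{-}[x]\cap N^{-}[y]=\emptyset$. So adding a vertex $x$ to a partial packing $B$ forbids adding any vertex $y$ whose closed in-neighborhood meets $N^{-}[x]$. The key is to bound the number of such newly-forbidden vertices. If $x$ has in-degree $\deg^{-}(x)$, then $N^{-}[x]$ has $\deg^{-}(x)+1$ vertices; each vertex $w\in N^{-}[x]$ can "be an in-neighbor" of at most $\deg^{+}(w)\le\Delta^{+}$ vertices, but $x$ itself is one of those for the in-neighbors of $x$, so the count of vertices $y\ne x$ that become forbidden is at most $\deg^{-}(x)\cdot(\Delta^{+}-1)$ from the in-neighbors of $x$, plus at most $\deg^{+}(x)\le\Delta$ vertices $y$ with $x\in N^{-}[y]$ directly, plus $x$ itself. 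Summing, each chosen $x$ removes at most $1+\Delta+\deg^{-}(x)(\Delta^{+}-1)$ vertices from the pool.

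Next I would run the greedy procedure: repeatedly pick any remaining vertex, add it to $B$, and delete it together with all vertices it forbids. If we pick a vertex $x$ of minimum degree first (or handle it separately), we can use the sharper quantity $1+\deg^{-}(x)\cdot(\Delta^{+}-1)+\deg^{+}(x)$ with $\deg^{+}(x)\le\Delta-\delta^{*}$ in the worst case where the remaining out-degree after accounting for in-neighbors is small; this is exactly where the correction terms $\Delta-\delta$ and $(\Delta^{+}-1)(\Delta^{-}-\delta^{*})$ in the numerator come from. Concretely, the idea is: among all vertices, the one realizing $\delta=\delta(D)$ blocks at most $1+\delta^{*}(\Delta^{+}-1)+(\delta-\delta^{*})$ others (using $\deg^{-}+\deg^{+}=\deg=\delta$ for that vertex and $\deg^{-}\le\delta^{*}$ is forced by the definition of $\delta^{*}$ over minimum-degree vertices), while every other chosen vertex blocks at most $1+\Delta^{-}(\Delta^{+}-1)+\Delta$. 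Then, writing $|B|=\rho^{*}$ for the size of the packing produced, the total number of deleted vertices is at least $n$, giving
\[
n\le \bigl(1+\delta^{*}(\Delta^{+}-1)+(\delta-\delta^{*})\bigr)+(\rho^{*}-1)\bigl(1+\Delta^{-}(\Delta^{+}-1)+\Delta\bigr),
\]
and solving for $\rho^{*}$ yields $\rho(D)\ge\rho^{*}\ge\frac{n+\Delta-\delta+(\Delta^{+}-1)(\Delta^{-}-\delta^{*})}{1+\Delta+\Delta^{-}(\Delta^{+}-1)}$ after rearranging, since $\bigl(1+\delta^{*}(\Delta^{+}-1)+(\delta-\delta^{*})\bigr)=\bigl(1+\Delta^{-}(\Delta^{+}-1)+\Delta\bigr)-\bigl(\Delta-\delta+(\Delta^{+}-1)(\Delta^{-}-\delta^{*})\bigr)$.

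The main obstacle, and the part requiring genuine care rather than routine algebra, is the bookkeeping in the blocking estimate: one must make sure not to double-count vertices that are simultaneously out-neighbors of $x$ and out-neighbors of an in-neighbor of $x$, and one must verify that the "$-1$" adjustments (subtracting $x$ itself from the in-neighbor counts) are valid even when $x$ has in-degree $0$ or when some in-neighbor of $x$ has no other out-neighbors. It will also be necessary to argue that the first vertex can indeed be chosen to be one of minimum degree and that its blocking bound is as claimed despite the possible overlap of $N^{+}(x)$ with $\bigcup_{w\in N^{-}(x)}N^{+}(w)$ — overlaps only help (they reduce the count), so the inequality is safe, but this should be stated explicitly. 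For sharpness, I would exhibit a small family of digraphs — for instance regular or near-regular orientations where $\delta=\Delta$, $\delta^{*}=\Delta^{-}$, and the greedy deletions partition $V(D)$ exactly — so that every inequality above becomes an equality; a disjoint union of suitably oriented short cycles or a single directed cycle ($\Delta^{+}=\Delta^{-}=1$, $\Delta=\delta=2$, giving $\rho(D)=\lceil n/3\rceil$-type behavior, matching $\frac{n}{3}$) is the natural candidate to check.
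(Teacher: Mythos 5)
Your overall strategy is the same as the paper's: greedily insert vertices into $B$, delete everything each insertion forbids, and bound the number of deletions per step ($1+\Delta+\Delta^{-}(\Delta^{+}-1)$ generically, with a sharper count for a first vertex of minimum degree). However, your first-step count contains a genuine error. When you insert a minimum-degree vertex $x$ with $\deg^{-}(x)=\delta^{*}$, the blocked set is $\{x\}\cup N^{-}(x)\cup N^{+}(x)\cup\bigcup_{w\in N^{-}(x)}N^{+}(w)$; in particular the $\delta^{*}$ in-neighbors of $x$ are themselves blocked (if $w\in N^{-}(x)$ then $w\in N^{-}[w]\cap N^{-}[x]$), and they are not accounted for inside the terms $\delta^{*}(\Delta^{+}-1)$ or $\deg^{+}(x)$. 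The correct first-step bound is therefore $1+\delta+\delta^{*}(\Delta^{+}-1)$ (using the full underlying degree $\delta\geq|N^{-}(x)\cup N^{+}(x)|$, exactly as you do with $\Delta$ in the generic step), not your $1+(\delta-\delta^{*})+\delta^{*}(\Delta^{+}-1)$. This is why your closing identity fails: one checks that
$$\bigl(1+\Delta^{-}(\Delta^{+}-1)+\Delta\bigr)-\bigl(\Delta-\delta+(\Delta^{+}-1)(\Delta^{-}-\delta^{*})\bigr)=1+\delta+\delta^{*}(\Delta^{+}-1),$$
which differs from your first-step quantity by exactly $\delta^{*}$. The undercount is not harmless: the stronger inequality it would yield is false. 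For the directed star $S_{n}$ one has $\delta=\delta^{*}=\Delta^{-}=1$, $\Delta=\Delta^{+}=n-1$, the stated bound equals $1=\rho(S_{n})$ and is tight, while your version would give $\rho(S_{n})\geq(2n-1)/(2n-2)>1$, forcing $\rho(S_n)\ge 2$, a contradiction. With the corrected count your argument becomes exactly the paper's proof.

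Two smaller points. First, your worry about double counting and overlaps is resolved the right way (overlaps only reduce the number of deletions, so upper bounds per step suffice), but the subtraction of $1$ in $\delta^{*}(\Delta^{+}-1)$ is justified simply because $x\in N^{+}(w)$ for every $w\in N^{-}(x)$ and $x$ is already counted separately. Second, your proposed sharpness example is wrong: for the directed cycle $\vec{C}_{n}$ the bound evaluates to $n/3$ while $\rho(\vec{C}_{n})=\lfloor n/2\rfloor$ (the closed in-neighborhoods are the pairs $\{v_{i-1},v_{i}\}$, so a packing is just an independent set of the underlying cycle), so equality does not hold there. The paper's sharpness example is the directed star $S_{n}$, for which, as computed above, both sides equal $1$.
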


\begin{proof}
Let $G$ be the underlying graph of $D$. It suffices to construct a packing set $B$ of order at least the lower bound. We construct such subset $B\subseteq V(D)$ as follows. Let $u$ be vertex of minimum degree $\delta$ in $G$, for which $\mbox{deg}^{-}(u)=\delta^{*}$ in $D$. We consider $u$ as a member of $B$ and define $D_{u}=D-(N_G[u]\cup(\cup_{v\in N_D^-(u)}N_D^{+}(v)))$. We iterate this process for the remaining digraph $D_{u}$ until it is empty. It is easy to see that $B$ is a packing set after the final step. In the first step, we removed at most $1+\delta+\delta^{*}(\Delta^{+}-1)$ vertices, and in each of the following steps we removed at most $1+\Delta+\Delta^{-}(\Delta^{+}-1)$ vertices. This yields the following inequality:
$$1+\delta+\delta^{*}(\Delta^{+}-1)+(|B|-1)(1+\Delta+\Delta^{-}(\Delta^{+}-1))\geq n.$$
This implies the lower bound. To see the bound is sharp, we consider the directed star $S_{n}$.
\end{proof}

For the remaining part of this section we investigate the relationship between domination number and packing number of a connected contrafunctional digraph. Harary et al. \cite{hnc} characterized all contrafunctional digraphs as follows.

\begin{lemma}\emph{(\cite{hnc})}\label{D}
The following statements are equivalent for a connected digraph $D$.
\begin{itemize}
  \item[{\rm(i)}] $D$ is contrafunctional.
  \item[{\rm(ii)}] $D$ has exactly one directed cycle $C$ and the removal of any arc $(u,v)$ of $C$ results in a rooted tree with root $v$.
\end{itemize}
\end{lemma}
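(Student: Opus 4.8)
The plan is to prove the two implications separately, the key tool being the observation that in a contrafunctional digraph $D$ every arc $(a,b)$ is the \emph{unique} arc with head $b$. Hence the rule $g(v)=$ ``the in-neighbour of $v$'' defines a map $g\colon V(D)\to V(D)$, the arcs of $D$ are exactly the pairs $(g(v),v)$ with $v\in V(D)$ (so $|A(D)|=n$), and the underlying graph of $D$ coincides with the underlying graph of the \emph{functional digraph} of $g$ (the digraph on $V(D)$ whose arcs are the pairs $(v,g(v))$).

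For (i) $\Rightarrow$ (ii), I would first use this to locate the cycle: since $D$ is connected, the functional digraph of $g$ is connected with $n$ vertices and $n$ arcs, so it contains exactly one cycle, and that cycle has length at least $2$ because $D$ is loopless. Translating back, $D$ has exactly one directed cycle $C$. I would then fix an arbitrary arc $(u,v)$ of $C$ and set $D'=D-(u,v)$; in $D'$ the vertex $v$ has in-degree $0$ while every other vertex still has in-degree $1$, and $D'$ is acyclic because the only directed cycle of $D$ was $C$, which no longer lies in $D'$. It remains to check that $D'$ is connected: starting from any vertex $x\neq v$ and repeatedly passing to the unique in-neighbour in $D'$ yields a sequence of vertices that cannot repeat (a repetition would produce a directed cycle in $D'$), hence, $V(D)$ being finite, it must reach $v$; reading this sequence in reverse exhibits a directed path from $v$ to $x$ in $D'$. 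So every vertex is reachable from $v$, $D'$ is connected, and together with the in-degree conditions this says precisely that $D'$ is a rooted tree with root $v$. Since $(u,v)$ was arbitrary, (ii) follows.

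For (ii) $\Rightarrow$ (i) the argument is brief: pick any arc $(u,v)$ of the unique cycle $C$; by hypothesis $D-(u,v)$ is a rooted tree with root $v$, so there $v$ has in-degree $0$ and every other vertex has in-degree $1$. Restoring the arc $(u,v)$ raises the in-degree of $v$ to $1$ and leaves all other in-degrees unchanged, so every vertex of $D$ has in-degree $1$, i.e.\ $D$ is contrafunctional.

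The one step I expect to require genuine care is the assertion that a connected contrafunctional digraph has \emph{exactly} one directed cycle; the functional-digraph viewpoint above is the cleanest way I know to pin this down, and it also dispatches transparently the degenerate case in which $C$ is a single pair of opposite arcs. Everything else is bookkeeping with in-degrees together with the elementary fact that iterating ``move to the unique in-neighbour'' inside a finite acyclic digraph must terminate at its unique source.
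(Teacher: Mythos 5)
Your proof is correct. Note, however, that the paper offers no proof of this lemma at all: it is quoted verbatim from Harary, Norman and Cartwright \cite{hnc} as a known characterization, so there is no in-paper argument to compare yours against. Your functional-digraph argument is the standard one and is complete: existence and uniqueness of the cycle via the map $g$, then the in-degree bookkeeping plus the ``iterate the unique in-neighbour until you hit the unique source'' reachability argument, which matches exactly the paper's definition of a rooted tree (connected, root of in-degree $0$, all other in-degrees $1$). The only step I would tighten is the uniqueness of the directed cycle: the count ``connected, $n$ vertices, $n$ arcs, hence unicyclic'' needs to be read in the underlying \emph{multigraph} (a digon contributes two parallel edges, i.e.\ a cycle of length $2$), and one should note that every cycle of that multigraph is forced to be consistently directed because each vertex has a unique in-neighbour; you flag the digon case explicitly, so this is a matter of wording rather than a gap.
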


In fact, every connected contrafunctional digraph can be obtained by adding an arc $(v,r)$ to a rooted tree with root $r$, and every connected contrafunctional digraph gives a rooted tree by eliminating an arbitrary arc of its unique directed cycle. Hao \cite{ha}, defined the height $h(D)$ of a connected contrafunctional digraph $D$ as $\max\{d_{D}(v,V(C))\mid v\in V(D)\}$. Similarly to the discussion for a rooted tree, we define a special sequence corresponding to the connected contrafunctional digraph $D$ with $h(D)\geq2$, as follows. Let $D_{1}=D$. We select a leaf $v_{1}$ with maximum distance from $C$ and let $u_{1}$ be its in-neighbor. Let $D_{2}=D_{1}-N_{+}[u_{1}]$. Iterate this process for the remaining connected contrafunctional digraph $D_{i}$ until $D_{p}$ is the directed cycle $C$ or a connected contrafunctional digraph with height one. We denoted such sequence by $D_{1},\dots,D_{p-1},D_{p}$ (where $D_1=D$) and call it a RDSES of $D$.

In what follows, we give the exact value of $\gamma(D)$ in terms of the packing number $\rho(D)$, for each contrafunctional digraph $D$. To this end, we first present the following necessary lemma.

\begin{lemma}\label{E}
For any contrafunctional digraph $D$ with $h(D)=1$, $\gamma(D)=\rho(D)$.
\end{lemma}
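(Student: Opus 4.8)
The plan is to reduce the statement to a classical matching/covering equality in the underlying graph $H$ of $D$. We may assume $D$ is connected (otherwise we argue componentwise, since $\gamma$, $\rho$, and the graph invariants below are additive over connected components). First I would record two translations. Because $D$ is contrafunctional, every vertex $v$ has a unique in-neighbour, call it $f(v)$, so that $N^{-}[v]=\{v,f(v)\}$; this pair is an edge of $H$, and as $v$ ranges over $V(D)$ the pairs $\{v,f(v)\}$ run over exactly $E(H)$. Hence a set $S$ is a dominating set of $D$ iff it meets $\{v,f(v)\}$ for every $v$, i.e. iff $S$ is a vertex cover of $H$; thus $\gamma(D)=\tau(H)$, where $\tau$ denotes the vertex cover number. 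Dually, $B$ is a packing of $D$ iff the sets $N^{-}[u]=\{u,f(u)\}$, $u\in B$, are pairwise disjoint edges of $H$. Since each edge $e$ of $H$ equals $\{u,f(u)\}$ for a suitable endpoint $u$ of $e$ (the head of the corresponding arc; either endpoint works if $e$ arises from a pair of opposite arcs), a packing $B$ gives the matching $\{\{u,f(u)\}:u\in B\}$ of size $|B|$, and conversely every matching of $H$ arises in this way; hence $\rho(D)=\nu(H)$, where $\nu$ is the matching number. Thus the lemma becomes the assertion $\tau(H)=\nu(H)$.

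Next I would use the structure of $H$. By Lemma \ref{D}, $D$ has exactly one directed cycle $C$, so $H$ is connected and either is a tree (when $C$ has length two) or is unicyclic with its unique cycle equal to the underlying cycle of $C$. The hypothesis $h(D)=1$ forces every vertex outside $C$ to be a leaf whose in-neighbour lies on $C$; in particular there is at least one leaf (as $h(D)\neq0$), and every support vertex of $D$ belongs to $V(C)$. Fix such a support vertex $w\in V(C)$ together with one leaf $a$ adjacent from it, so $wa\in E(H)$ and $a$ has degree one in $H$. The key structural point is that $H-w$ is a forest: deleting $w$ destroys the only cycle of $H$ (or $H$ was already a tree).

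Then I would finish by sandwiching the two parameters. One always has $\nu(H)\le\tau(H)$ and $\tau(H)\le 1+\tau(H-w)$. Since $a$ is isolated in $H-w$ we get $\tau(H-w)=\tau(H-w-a)$; and since $H-w-a$ is a forest, hence bipartite, K\"onig's theorem gives $\tau(H-w-a)=\nu(H-w-a)$. Finally $\nu(H)\ge 1+\nu(H-w-a)$, because a maximum matching of $H-w-a$ together with the edge $wa$ is a matching of $H$. Combining these,
\[
\nu(H)\le\tau(H)\le 1+\tau(H-w)=1+\tau(H-w-a)=1+\nu(H-w-a)\le\nu(H),
\]
so all the terms coincide and $\gamma(D)=\tau(H)=\nu(H)=\rho(D)$.

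The step I expect to be the main obstacle is this last one: $H$ need not be bipartite, since $C$ may be an odd cycle, and indeed $\tau=\nu$ fails for an odd cycle on its own, so K\"onig's theorem cannot be applied to $H$ directly. The role of the hypothesis $h(D)=1$ is precisely to guarantee a pendant edge $wa$ attached at a vertex $w$ of the cycle, which lets us peel one edge off a maximum matching and one vertex off a minimum vertex cover simultaneously and reduce to a forest, where K\"onig applies. Two minor points also deserve a line of care: the degenerate case in which $C$ has length two (so $H$ is already a tree and one can invoke K\"onig immediately), and the ambiguity in choosing the head of an edge coming from a pair of opposite arcs (harmless, since either choice yields a valid packing).
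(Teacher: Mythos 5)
Your proof is correct, and it takes a genuinely different route from the paper's. The paper argues structurally inside the digraph: it picks a non-support vertex $v$ on the unique cycle whose in-neighbour $u$ is a support vertex, deletes the arc $(u,v)$ to obtain a rooted tree, and then invokes Theorem \ref{T2} ($\rho=\gamma$ for directed trees) together with the recursive star-elimination sequence to transfer the equality back to $D$; so it leans on the strongly-chordal machinery behind Theorem \ref{T2}. You instead observe that for \emph{any} contrafunctional digraph the unique in-neighbour map $f$ identifies dominating sets with vertex covers and packings with matchings of the underlying graph $H$, so the lemma becomes $\tau(H)=\nu(H)$; the hypothesis $h(D)=1$ then supplies a pendant edge $wa$ at a cycle vertex $w$, and deleting $w$ and $a$ kills the unique cycle and reduces everything to K\"onig's theorem on a forest. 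Your translation steps ($\gamma(D)=\tau(H)$, $\rho(D)=\nu(H)$) and the sandwich $\nu(H)\le\tau(H)\le 1+\tau(H-w-a)=1+\nu(H-w-a)\le\nu(H)$ all check out, including the care taken with opposite arcs and the two-cycle case. What your approach buys is self-containment and transparency: it avoids Theorem \ref{T2} entirely, and the identification $\gamma(D)-\rho(D)=\tau(H)-\nu(H)$ is valid for every contrafunctional digraph, which makes the later dichotomy of the paper (the defect is $1$ exactly when the elimination sequence terminates in an odd cycle) visible as the classical failure of K\"onig's equality on odd cycles. What the paper's route buys is uniformity with the rest of Section \ref{sect-gen-contrafunc}, where the same RDSES device is reused for the main theorem.
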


\begin{proof}
Let $C$ be the unique directed cycle of $D$. If every vertex on $C$ is a support vertex, then it is easy to see that $\gamma(D)=\rho(D)=|V(C)|$. Thus, we may assume that some vertices on $C$ are not support vertices. We choose $v\in V(C)$ which is not a support vertex such that its in-neighbor, say $u$, is a support vertex. Consider $D-(u,v)$ and the RDSES $T_{1},\dots,T_{p-1},T_{p}$ of it. If $T_{p}=\emptyset$, then the maximum packing $B=\{v_{1},\dots,v_{p-1}\}$ of $D-(u,v)$ is a packing in $D$. So, $\rho(D)\geq|B|=\rho(D-(u,v))=\rho(D-(u,v))\geq \gamma(D)$. Therefore, $\gamma(D)=\rho(D)$. Now let $T_{p}$ be the isolated vertex $v$. Then, $S=\{u_{1},\dots,u_{p-1}\}$ is a dominating set in $D$. On the other hand, $B$ is a packing in $D$. Therefore, $p-1\geq \gamma(D)\geq \rho(D)\geq|B|=p-1$. Hence, $\gamma(D)=\rho(D)=p-1$, which completes the proof.
\end{proof}

We define $\Omega$ as the family of all connected contrafunctional digraphs $D$ which has a RDSES $D_{1},\dots,D_{p-1},D_{p}$ in which $D_{p}$ is an odd directed cycle. We are now in a position to present the main theorem of this section.

\begin{theorem}
For any connected contrafunctional digraph $D$,
\begin{equation}
\gamma‎‎(D)=‎\left \{
\begin{array}{lll}
‎\rho(D)+1,& \mbox{if} & D\in \Omega,‎ ‎‎‎‎ \vspace{1.5mm}\\
\rho(D),‎ & \mbox{if} & D\notin \Omega.
\end{array}
\right.
\end{equation}
\end{theorem}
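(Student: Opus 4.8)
The strategy is to reduce the general contrafunctional case to the height-one case handled by Lemma \ref{E}, using a RDSES to ``peel off'' the directed stars $N^{+}[u_{i}]$ one at a time. Recall that by Lemma \ref{D}, a connected contrafunctional digraph $D$ is obtained from a rooted tree with root $r$ by adding an arc $(v,r)$, and $D$ has a unique directed cycle $C$. Fix a RDSES $D_{1},\dots,D_{p-1},D_{p}$ of $D$, so that each step removes a closed out-neighborhood $N^{+}[u_{i}]$ of a support vertex $u_{i}$ chosen as the in-neighbor of a farthest leaf $v_{i}$, and $D_{p}$ is either the cycle $C$ itself or a contrafunctional digraph of height one (which, being connected and contrafunctional, is $C$ together with some directed stars rooted at vertices of $C$).

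First I would establish the lower bound $\gamma(D)\ge\rho(D)$ in general, which is immediate from Remark \ref{EQ2}. The heart of the argument is the construction of a packing set and a dominating set of controlled sizes. As in the proof of Lemma \ref{E} and the discussion preceding Algorithm \ref{algor}, the chosen leaves $v_{1},\dots,v_{p-1}$ form a packing in $D$ because the sets $N^{+}[u_{i}]$ are pairwise disjoint (each is removed before the next leaf is chosen), so $\rho(D)\ge (p-1)+\rho(D_{p})$; conversely, the sets $N_{D_{1}}^{+}[u_{1}],\dots,N_{D_{p-1}}^{+}[u_{p-1}]$ together with $V(D_{p})$ partition $V(D)$, and any $\rho(D)$-set meets each block of a refinement of this partition at most once (here one uses that $D_{p}$, being height one, admits a partition into directed stars as in Lemma \ref{E}), which forces $\rho(D)= (p-1)+\rho(D_{p})$. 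Similarly, the support vertices $u_{1},\dots,u_{p-1}$ together with a $\gamma(D_{p})$-set of $D_{p}$ form a dominating set of $D$ (each $u_{i}$ dominates all of $N_{D_{i}}^{+}(u_{i})$, and the vertices of $D_{p}$ are dominated by the dominating set of $D_{p}$), so $\gamma(D)\le (p-1)+\gamma(D_{p})$. Combining, $\gamma(D)-\rho(D)\le \gamma(D_{p})-\rho(D_{p})$.

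Next I would analyze $D_{p}$ directly. If $D_{p}$ has height one, Lemma \ref{E} gives $\gamma(D_{p})=\rho(D_{p})$, and then $\gamma(D)=\rho(D)$; moreover in this case the RDSES can be continued so as to end in the cycle $C$, and one checks $D\notin\Omega$ exactly when this reduction terminates without an odd cycle appearing as the final term --- i.e. the delicate bookkeeping is to see that membership in $\Omega$ is independent of the RDSES chosen, and coincides with the case where the only obstruction to $\gamma=\rho$ is a lone odd cycle. If $D_{p}=C$ is the directed cycle $C_{m}$, then a direct computation gives $\gamma(C_{m})=\lceil m/2\rceil$ and $\rho(C_{m})=\lfloor m/2\rfloor$ (a packing in the directed cycle is an independent set with pairwise disjoint in-neighborhoods, equivalently a set of non-consecutive vertices), so $\gamma(C_{m})-\rho(C_{m})=1$ if $m$ is odd and $0$ if $m$ is even; this is precisely the $\Omega$ versus non-$\Omega$ dichotomy at the level of the last term. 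Finally I would verify that the inequalities in the previous paragraph are in fact equalities --- i.e. $\gamma(D)=(p-1)+\gamma(D_{p})$ and $\rho(D)=(p-1)+\rho(D_{p})$ --- using that the peeled stars are ``forced'': each $u_{i}$ must either be in a minimum dominating set or be dominated, and an exchange argument (replacing a dominator of $v_{i}$ by $u_{i}$) shows one may assume $u_{i}$ lies in the dominating set, reducing optimally to $D_{i+1}$.

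\textbf{Main obstacle.} The routine parts are the partition arguments and the arithmetic for cycles; the genuinely delicate point is showing that the reduction is \emph{tight} in both directions simultaneously and that the final term's parity type is a well-defined invariant of $D$ (not of the particular RDSES) --- equivalently, that $D\in\Omega$ is intrinsic. I expect this to require a careful exchange/optimality argument showing that for the specific stars peeled in a RDSES, there is always a $\gamma(D)$-set containing all the $u_{i}$ and a $\gamma(D_{p})$-set, and dually for packings, so that no ``slack'' is lost at any step and the single possible $+1$ can only come from an odd cycle surviving to the end.
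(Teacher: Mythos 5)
Your plan is correct and follows essentially the same route as the paper: peel off directed stars via a RDSES to reduce to $D_p$, apply Lemma \ref{E} when $D_p$ has height one, and use the parity computation $\gamma(C_m)-\rho(C_m)=\lceil m/2\rceil-\lfloor m/2\rfloor$ when $D_p$ is the cycle, together with the identities $\rho(D)=\rho(D_p)+p-1$ and $\gamma(D)=\gamma(D_p)+p-1$. The tightness issue you flag as the main obstacle is real but is exactly the step the paper itself dispatches with a ``clearly''/``similarly'', and the well-definedness of membership in $\Omega$ follows a posteriori from the equivalence itself since $\gamma(D)-\rho(D)$ is intrinsic to $D$.
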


\begin{proof}
We have $\rho(D)\leq \gamma(D)$, by Remark \ref{EQ2}. Now let $B$ be a maximum packing in $D-(u,v)$, in which $(u,v)$ is an arc on the unique cycle of $D$. Then $\rho(D-(u,v))=\gamma(D-(u,v))$, by Theorem \ref{T2}. It is easy to verify that $B\setminus\{v\}$ is a packing in $D$. Thus,
$$\rho(D-(u,v))-1\leq|B\setminus\{v\}|\leq \rho(D).$$
Therefore, $\gamma(D)\leq \gamma(D-(u,v))\leq \rho(D)+1$ and consequently, $\gamma(D)=\rho(D)$ or $\gamma(D)=\rho(D)+1$.

We consider a RDSES $D_{1},\dots,D_{p-1},D_{p}$ of $D$. Let $B$ be a maximum packing of $D_{p}$. Clearly, $B\cup\{v_{1},\dots,v_{p-1}\}$ is a maximum packing of $D$. Therefore, $\rho(D)=\rho(D_{p})+p-1$. Similarly, $\gamma(D)=\gamma(D_{p})+p-1$. So, $\gamma(D)=\rho(D)+1$ if and only if $\gamma(D_{p})=\rho(D_{p})+1$. By Lemma \ref{E}, and since the packing and domination numbers of an even directed cycle are the same, we have $\gamma(D)=\rho(D)+1$ if and only if $D_{p}$ is an odd directed cycle, and the proof is completed.
\end{proof}


\section{Total and open domination}\label{sect-total-open}

\ \ \ \ In this section, we consider the total and the open domination numbers in digraphs. Clearly, these parameters exist for a digraph $D$ if and only if $D$ has no isolated vertices (or equivalently $\delta^{-}(D)\geq1$). So, whenever this parameter appears, we assume that this condition is satisfied.

Arumugam et al. \cite{ajv} proved that $2n/(2\Delta^++1)$ is a lower bound on $\gamma_{t}(D)$, for any digraph $D$ of order $n$ without isolated vertices. They also bounded $\gamma_{o}(D)$ from below by $n/\Delta^+$, for any digraph $D$ of order $n$ with $\delta^{-}(D)\geq1$. In consequence, they raised up the following problems.

\begin{p}\label{p1}
Characterize the class of digraphs $D$ for which $\gamma_{t}(D)=2n/(2\Delta^++1)$.
\end{p}
\begin{p}\label{p2}
Characterize the class of digraphs $D$ for which $\gamma_{o}(D)=n/\Delta^+$.
\end{p}

At next we solve these problems. The solution to the second problem is along the similar lines to the first one but different in structures. For the sake of completeness we describe it, as well.

To solve the first problem, we construct a family $\Theta$ of digraphs $D$ as follows. Let $D'$ be a digraph with vertex set $V(D')=\{u_{1},v_{1},\dots,u_{r},v_{r}\}$ and arc set $A(D')=\{(u_{1},v_{1}),\dots,(u_{r},v_{r})\}$. Add $k$ private out-neighbors with respect to $V(D')$ for $u_{1},\dots, u_{r}$, and $k+1$ private out-neighbors with respect to $V(D')$ for $v_{1},\dots, v_{r}$. Let
$$V(D)=V(D')\cup\left(\bigcup_{i=1}^rpn^{+}(u_{i},V(D'))\right)\cup\left(\bigcup_{i=1}^rpn^{+}(v_{i},V(D'))\right).$$
We add some arcs among the vertices in $V(D)\setminus V(D')$ and some arcs $(v,u_{i})$ and $(v,v_{j})$, for some $v\in V(D)\setminus V(D')$ and $1\leq i,j\leq r$, such that $deg^+(v)\leq k+1$, for all $v\in V(D)\setminus V(D')$. Clearly, every vertex in $V(D)\setminus V(D')$ is adjacent from exactly one vertex in $V(D')$. Moreover, $\Delta^+(D)=k+1$.

To solve the second problem, we construct the family $\Sigma$ of digraphs $D$ as follows. Let $D'$ be a contrafunctional digraph and $k\geq \Delta^{+}(D')$. We add $k-deg_{D'}^{+}(v)$ private out-neighbors with respect to $V(D')$ for each vertex $v$ of $D'$. Let
$$V(D)=V(D')\cup\left(\bigcup_{v\in V(D')}pn^{+}(v,V(D'))\right).$$
Add some arcs among the vertices in $V(D)\setminus V(D')$ and some arcs $(u,v)$, for some $u\in V(D)\setminus V(D')$ and $v\in V(D')$, such that $deg^+(u)\leq k$, for all $u\in V(D)\setminus V(D')$. Clearly, every vertex in $V(D)$ is adjacent from exactly one vertex in $V(D')$. Moreover, $\Delta^+(D)=k$.

We are now in a position to present the following theorem.
\begin{theorem}\label{T4}
Let $D$ be a digraph of order $n$ and maximum out-degree $\Delta^+$. Then, the following statements hold.
\begin{enumerate}[{\rm (i)}]
  \item $\gamma_{t}(D)=2n/(2\Delta^++1)$ if and only if $D\in \Theta$.
  \item $\gamma_{o}(D)=n/\Delta^+$ if and only if $D\in \Sigma$.
\end{enumerate}
\end{theorem}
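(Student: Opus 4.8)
The plan is to prove both equivalences by the same template: first establish, via the known lower bound of Arumugam et al., a tight structural description of any digraph attaining equality, and then verify conversely that every member of the constructed family attains equality. I would handle part (i) in detail and then indicate the (parallel but structurally different) modifications needed for part (ii).

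For the forward direction of (i), suppose $\gamma_t(D)=2n/(2\Delta^++1)$ and let $S$ be a $\gamma_t(D)$-set. The proof of the bound $\gamma_t(D)\ge 2n/(2\Delta^++1)$ in \cite{ajv} comes from a counting argument: each vertex of $S$ out-dominates at most $\Delta^+$ vertices, the subgraph $D\langle S\rangle$ has no isolated vertex so $S$ can be paired into arcs $(u_i,v_i)$, and one double-counts arcs from $S$ into $V(D)$. Equality forces every inequality in that argument to be tight. Concretely I would argue: (1) $D\langle S\rangle$ must be a disjoint union of exactly $r$ arcs $(u_1,v_1),\dots,(u_r,v_r)$ (no longer paths, no extra arcs inside $S$), so $|S|=2r$; (2) each $u_i$ has exactly $k$ out-neighbors outside $S$ and each $v_i$ has exactly $k+1$ out-neighbors outside $S$, where $k+1=\Delta^+$, and these out-neighbor sets are pairwise disjoint and are private out-neighbors w.r.t. $V(D')$ in the sense of the family $\Theta$; (3) every vertex of $V(D)\setminus S$ is out-dominated by exactly one vertex of $S$ (otherwise the count of dominated vertices would exceed what equality permits). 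Setting $V(D')=S$, this is precisely the description of $\Theta$; the only remaining freedom — arbitrary arcs among $V(D)\setminus V(D')$ and back-arcs into $S$ with out-degrees capped by $k+1$ — does not affect $\gamma_t$, which I check in the converse. For the converse, given $D\in\Theta$, the set $V(D')$ is a total dominating set (each $v\in V(D)\setminus V(D')$ is out-dominated by its unique neighbor in $V(D')$, and $D\langle V(D')\rangle$ is a union of arcs), so $\gamma_t(D)\le 2r$; combined with the lower bound $\gamma_t(D)\ge 2n/(2\Delta^++1)=2n/(2k+3)$ and the identity $n=2r+rk+r(k+1)=r(2k+3)$, equality follows.

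For part (ii) the template is the same but the "pairing" disappears: the bound $\gamma_o(D)\ge n/\Delta^+$ from \cite{ajv} just says each vertex of an open dominating set out-dominates at most $\Delta^+$ vertices and every vertex of $V(D)$ (including those in the set) must be out-dominated. Equality forces: the open dominating set $V(D')$ out-dominates each vertex of $V(D)$ exactly once — i.e. every vertex of $D$ has in-degree exactly one from $V(D')$; in particular $V(D')$ itself has in-degree one within $D\langle V(D')\rangle$, so $D\langle V(D')\rangle$ is contrafunctional; and each $v\in V(D')$ has exactly $k=\Delta^+$ out-neighbors, its non-$V(D')$ out-neighbors being private out-neighbors w.r.t. $V(D')$. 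That is exactly the family $\Sigma$. The converse is again a direct verification: in $D\in\Sigma$, $V(D')$ is an open dominating set, and $n=\sum_{v\in V(D')}k=k\,|V(D')|$ with $\gamma_o(D)\le|V(D')|=n/k=n/\Delta^+$, matched by the lower bound.

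The main obstacle I expect is pinning down the forward direction cleanly — specifically, showing that equality in the double-counting argument of \cite{ajv} forces the \emph{exact} combinatorial structure of $\Theta$ (resp. $\Sigma$) and nothing weaker, while simultaneously confirming that the "free" arcs permitted inside $V(D)\setminus V(D')$ and the back-arcs into $V(D')$ genuinely leave $\gamma_t$ (resp. $\gamma_o$) unchanged, so that the two descriptions match on the nose. This requires being careful that (a) no vertex outside the optimal set is out-dominated twice, (b) the optimal set induces precisely the prescribed subgraph, and (c) the privacy conditions hold; each of these is a separate tightness claim extracted from one inequality in the original proof, and I would state them as itemized sub-claims before assembling them.
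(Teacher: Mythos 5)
Your proposal is correct and follows essentially the same route as the paper: both directions of each equivalence are obtained by forcing equality throughout the double-counting argument behind the lower bounds of Arumugam et al.\ (yielding exactly the structure of $\Theta$, resp.\ $\Sigma$) and by directly verifying that $V(D')$ is an optimal total (resp.\ open) dominating set for members of the family. The itemized tightness sub-claims you list are precisely the ones the paper extracts from its inequalities (\ref{EQ3}), (\ref{EQ4}) and (\ref{EQ5}).
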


\begin{proof}
(i) We need to restate the proof of the lower bound in order to prove our result. Let $Q$ be a $\gamma_t(D)$-set. Every vertex in $Q$ has at most $\Delta^+$ out-neighbors, and each vertex in $V(D)\setminus Q$ has at least one in-neighbor in $Q$, by the definition. Furthermore, $|(Q,Q)_{D}|\geq\lceil|Q|/2\rceil$. Therefore,
\begin{equation}\label{EQ3}
\Delta^+|Q|\geq|(Q,V(D)\setminus Q)_{D}|+|(Q,Q)_{D}|\geq n-\lfloor|Q|/2\rfloor\geq n-|Q|/2.
\end{equation}
Thus, $\gamma_{t}(D)\geq2n/(2\Delta^++1)$.

Let $D\in \Theta$ and let $S'=V(D')$. Clearly, $S'$ is a total dominating set in $D$. Since every vertex in $V(D)\setminus S'$ is adjacent from exactly one vertex in $S'$, we have $|(S',V(D)\setminus S')_{D}|=n-|S'|$. Moreover, $D\langle S'\rangle$ is the disjoint union of directed paths $P_{2}$ and therefore $|(S',S')_{D}|=|S'|/2$. On the other hand, $\Delta^+|S'|=|(S',V(D)\setminus S')_{D}|+|(S',S')_{D}|$. Thus $\Delta^+|S'|=n-|S'|/2$. Therefore, $\gamma_{t}(D)\leq|S'|=2n/(2\Delta^++1)$, which implies the equality.

Conversely, suppose that the equality holds and let $S$ be a $\gamma_t(D)$-set. Thus, all the inequalities in (\ref{EQ3}) must be equalities (whether $S$ is used instead of $Q$). Now, since $|(S,S)_{D}|\ge |S|/2$, it must happen (by using (\ref{EQ3}) again) that $\Delta^+|S|=|(S,V(D)\setminus S)_{D}|+|(S,S)_{D}|$, $|(S,V(D)\setminus S)_{D}|=n-|S|$ and $|(S,S)_{D}|=|S|/2$. This shows that every vertex in $V(D)\setminus S$ is adjacent from exactly one vertex in $S$ and also, since $D\langle S\rangle$ has no isolated vertices, that $D\langle S\rangle$ is a disjoint union of paths $P_{2}$ with arcs $(u'_{1},v'_{1})$,$\dots$,$(u'_{|S|/2},v'_{|S|/2})$ for some set of vertices $\{u'_{1},v'_{1},\dots,u'_{|S|/2},v'_{|S|/2}\}$. Since $\Delta^+|S|=|(S,V(D)\setminus S)_{D}|+|(S,S)_{D}|$, each vertex in $S$ has out-degree $\Delta^{+}$. This implies that each $u'_{i}$ has $\Delta^+-1$ (private) out-neighbors in $V(D)\setminus S$ and that each $v'_{i}$ has $\Delta^+$ (private) out-neighbors in $V(D)\setminus S$, for all $1\leq i\leq|S|/2$. Therefore, we can easily deduce that $D\in \Theta$.

(ii) We first present a proof for the inequality $\gamma_{o}(D)\geq n/\Delta^{+}$. Let $Q$ be a $\gamma_o(D)$-set. Since every vertex in $V(D)$ is adjacent from at least one vertex in $Q$, we have
\begin{equation}\label{EQ4}
|(Q,Q)_{D}|=\sum_{v\in Q}deg_{Q}^{+}(v)=\sum_{v\in Q}deg_{Q}^{-}(v)\geq|Q|
\end{equation}
and
\begin{equation}\label{EQ5}
\Delta^+|Q|\geq|(Q,V(D)\setminus Q)_{D}|+|(Q,Q)_{D}|\geq n-|Q|+|Q|.
\end{equation}
Thus, $\gamma_{o}(D)=|Q|\geq n/\Delta^+$.

Let $D\in \Sigma$ and let $S'=V(D')$. It is easy to see that $S'$ is an open dominating set in $D$ and that $|(S',S')_{D}|=\sum_{v\in S'}deg_{S'}^{-}(v)=\sum_{v\in S'}deg_{S'}^{+}(v)=|S'|$. Furthermore, every vertex in $V(D)\setminus S'$ has exactly one in-neighbor in $S'$. On the other hand,
$$|(S',V(D)\setminus S')_{D}|=\sum_{v\in S'}deg_{V(D)\setminus S'}^{+}(v)=\Delta^{+}|S'|-\sum_{v\in S'}deg_{S'}^{+}(v)=|S'|(\Delta^{+}-1).$$
So, the inequalities in (\ref{EQ4}) and (\ref{EQ5}) hold with equality when we replace $Q$ by $S'$. Therefore, $\gamma_{o}(D)\leq|S'|=n/\Delta^+$. This implies the equality.

Suppose now that the equality holds and that $S$ is a $\gamma_{o}(D)$-set in $D$. Then, the inequalities in (\ref{EQ4}) and (\ref{EQ5}) hold with equality when we use $S$ instead of $Q$. Since $|(S,S)_{D}|=|S|$ and $S$ is an open dominating set, $deg_{S}^{-}(v)=1$ for all vertices $v\in S$. Therefore, $D\langle S\rangle$ is a contrafunctional digraph. Suppose now that there exists a vertex $v\in S$ for which $deg_{V(D)\setminus S}^{+}(v)<\Delta^{+}-deg_{S}^{+}(v)$. Then, $|(S,V(D)\setminus S)_{D}|<\Delta^{+}|S|-\sum_{v\in S}deg_{S}^{+}(v)=|S|(\Delta^{+}-1)$, a contradiction. Therefore, $deg_{V(D)\setminus S}^{+}(v)=\Delta^{+}-deg_{S}^{+}(v)$ for each vertex $v\in S$. On the other hand, $|(S,V(D)\setminus S)_{D}|=n-|S|$ shows that every vertex in $V(D)\setminus S$ has exactly one in-neighbor in $S$. So, every vertex $v\in S$ has $\Delta^{+}-deg_{S}^{+}(v)$ private out-neighbors in $V(D)\setminus S$ and therefore, $D\in \Sigma$.
\end{proof}

Hao and Chen \cite{hc} introduced the {\em out-Slater number} $s\ell^{+}(D)$ of a digraph $D$ of order $n$ as min$\{k\mid\lfloor k/2\rfloor+d^{+}_{1}+\cdots+d^{+}_{k}\geq n\}$, where $d^{+}_{1},\cdots,d^{+}_{k}$ are the first $k$ largest out-degrees of $D$. Among other results, they showed that
\begin{equation}\label{EQU1}
\gamma_{t}(D)\geq s\ell^{+}(D),
\end{equation}
for all digraphs $D$ with no isolated vertices. Also, for a directed tree $T$ of order $n\geq2$ with $\ell$ leaves, they proved that
\begin{equation}\label{EQU2}
s\ell^{+}(T)\geq2(n-\ell+1)/3.
\end{equation}

From now on we bound $\gamma_{t}(T)$ from above for a directed tree $T$ of order $n\geq2$ just in terms of the out Slater number.
\begin{theorem}
Let $T$ be a directed tree of order $n\geq2$. Then,
$$s\ell^{+}(T)\leq \gamma_{t}(T)\leq \frac{3}{2}s\ell^{+}(T)-1.$$
Moreover, all integer values between the lower and upper bounds are realizable.
\end{theorem}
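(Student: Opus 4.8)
The plan is to establish the two inequalities separately and then argue realizability by exhibiting a family of directed trees. The lower bound $s\ell^{+}(T)\leq \gamma_{t}(T)$ is already available: it is precisely inequality (\ref{EQU1}) specialized to directed trees (which have $\delta^{-}\geq 1$ being irrelevant here — but in fact a directed tree need not have all in-degrees positive, so one must note that $\gamma_t$ exists iff $T$ has no isolated vertex, i.e. $n\geq 2$ suffices only when every vertex has an in-neighbor; more carefully, since we write $\gamma_t(T)$ we are tacitly assuming $\delta^-(T)\geq 1$). So the only real work is the upper bound $\gamma_{t}(T)\leq \tfrac{3}{2}s\ell^{+}(T)-1$.

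For the upper bound, I would combine two facts already in the excerpt. First, there is a known upper bound on $\gamma_t(T)$ for directed trees in terms of $n$ and $\ell$; the natural candidate — paralleling the undirected total domination bound $\gamma_t(G)\le 2(n-\ell)/3$-type results and matching the shape of (\ref{EQU2}) — is $\gamma_{t}(T)\leq 2(n-\ell+1)/3 \cdot (\text{something})$, but since the excerpt only hands us (\ref{EQU2}), namely $s\ell^{+}(T)\geq 2(n-\ell+1)/3$, the cleaner route is: obtain an upper bound $\gamma_t(T)\le f(n,\ell)$ and then use (\ref{EQU2}) to replace $n-\ell$ by a quantity controlled by $s\ell^+(T)$. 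Concretely, if one can show $\gamma_{t}(T)\leq n-\ell$ (or a similarly small bound), then since (\ref{EQU2}) gives $n-\ell \leq \tfrac{3}{2}s\ell^{+}(T)-1$, the result follows immediately. So the first key step is to prove the auxiliary bound $\gamma_{t}(T)\leq n-\ell$ for every directed tree $T$ of order $n\geq 2$ with $\delta^-(T)\geq 1$. This should follow from a direct constructive/inductive argument: take $S=V(T)\setminus L$ where $L$ is the set of leaves; every leaf has its in-neighbor (a support vertex) in $S$, every non-leaf that lies in $S$ needs an in-neighbor in $S$ — here one must be slightly careful because a support vertex might only be reachable from a leaf, which cannot happen in a tree where leaves have out-degree $0$... actually in a directed tree a leaf is a vertex of out-degree $0$, so no leaf is anyone's in-neighbor, hence $S=V\setminus L$ dominates all leaves; but $D\langle S\rangle$ could still have an isolated vertex, so one repairs $S$ by adding for each such isolated vertex one in-neighbor or out-neighbor, and then re-counts. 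I expect a clean induction on $n$ (delete a deepest leaf and its private structure, apply the hypothesis, re-insert) to give $\gamma_t(T)\le n-\ell$.

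The second key step is realizability: I would build, for each target value $t$ in the closed interval $[s\ell^{+}(T),\ \tfrac{3}{2}s\ell^{+}(T)-1]$, a directed tree achieving $\gamma_t(T)=t$ with the prescribed out-Slater number. The standard device is a "caterpillar-like" rooted tree: take a directed path as spine and attach pendant leaves in controlled numbers so that the first few out-degrees (which determine $s\ell^+$) are fixed while the total domination number can be tuned by lengthening the spine. One chooses one extremal family (spiders/stars glued along a path) realizing the lower bound $\gamma_t = s\ell^+$, another (long directed paths, where $\gamma_t(P_n)$ grows like $n/2$ and $s\ell^+$ like $2n/3$, giving ratio close to $3/2$) realizing values near the upper bound, and interpolates by a hybrid construction — adding one spine-segment at a time changes $\gamma_t$ by a small fixed amount while keeping $s\ell^+$ constant over a range.

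The main obstacle I anticipate is the second step rather than the first: verifying that a carefully chosen interpolating family hits \emph{every} integer in the interval, with the two parameters $\gamma_t$ and $s\ell^+$ simultaneously under control, requires a somewhat delicate bookkeeping argument (the out-Slater number is defined via a sum of the top $k$ out-degrees, so attaching leaves anywhere changes it unless one attaches them to already-high-degree vertices). The auxiliary inequality $\gamma_t(T)\le n-\ell$ and its combination with (\ref{EQU2}) via $n-\ell \le \tfrac32 s\ell^+(T)-1$ I expect to be routine; tightness of the inequality chain (needed implicitly so the endpoints are realizable) should come out of the same extremal families.
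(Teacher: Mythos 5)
Your route to the two inequalities is exactly the paper's: the lower bound is (\ref{EQU1}) restricted to directed trees, and the upper bound is obtained by first proving $\gamma_{t}(T)\leq n-\ell$ and then substituting (\ref{EQU2}) in the form $n-\ell\leq\frac{3}{2}s\ell^{+}(T)-1$. Two places, however, stop short of a proof. First, you correctly observe that $S=V(T)\setminus L$ may leave isolated vertices in $D\langle S\rangle$ (and one should add that it may even fail to dominate: a vertex of in-degree $0$ all of whose out-neighbours are leaves causes trouble), but your proposed repair --- adding an in- or out-neighbour for each isolated vertex and ``re-counting'' --- would push $|S|$ past $n-\ell$, and the ``clean induction'' that is supposed to recover the bound is not carried out. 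The paper itself merely asserts that the non-leaf vertices form a total dominating set once $S_{n}$ is excluded, so you are no worse off than the paper on this point, but the step is not established in your write-up.

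Second, and more substantially, the realizability clause is part of the theorem statement and you leave it as a programme; you yourself identify the bookkeeping as the main obstacle. The paper resolves it with a single concrete family: take a directed path $v_{1},\dots,v_{a}$, attach $2a$ pendant leaves adjacent from each $v_{i}$, and then subdivide exactly one pendant arc at each of $b$ spine vertices, where $0\leq b\leq\lfloor a/2\rfloor-1$. The $2a$ pendant leaves force the $a$ largest out-degrees to be those of the spine vertices, which pins $s\ell^{+}(T)=a$ (the subdivision vertices have out-degree $1$ and cannot enter the top-$a$ sum in a way that changes the threshold), while each subdivision forces one additional vertex into every total dominating set, giving $\gamma_{t}(T)=a+b$. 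This is precisely the device your sketch lacks: a mechanism that raises $\gamma_{t}$ one unit at a time while provably freezing $s\ell^{+}$. Until you exhibit such a family and verify both parameter values, the ``all integer values are realizable'' claim remains unproven.
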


\begin{proof}
The lower bound is that from \cite{hc}. On the other hand, the upper bound holds for the directed star $S_{n}$ with $s\ell^{+}(S_{n})=\gamma_{t}(S_{n})=2$. So, we may assume that $T$ is different from $S_{n}$. Clearly, the set of all non-leaf vertices of $T$ is a total dominating set in $D$ and so,
\begin{equation}\label{EQU3}
\gamma_{t}(T)\leq n-\ell.
\end{equation}
Now the desired upper bound follows from (\ref{EQU2}) and (\ref{EQU3}). To show that all values between the lower and upper bounds are realizable, it suffices to prove that for any integer $a\geq 2$ and $0\leq b\leq\lfloor a/2\rfloor-1$, there exists a rooted tree $T$ such that $s\ell^{+}(T)=a$ and $\gamma_{t}(T)=a+b$.

Consider $T'$ as a rooted tree obtained from a directed path $P_{a}$ on $a\geq2$ vertices $v_{1},\cdots,v_{a}$, consecutively, by adding $2a$ leaves adjacent from each vertex of $P_{a}$. Let $T$ be the rooted tree obtained from $T'$ by replacing exactly one pendant arc $(v_{i},v)$ with a directed path $v_{i},w_{i},v$ through a new vertex $w_{i}$, for every $0\leq i\leq b$. It is easy to check that $n=2a^{2}+a+b$ and that $\gamma_{t}(T)=a+b$. If $d^{+}_{1}\geq \cdots \geq d^{+}_{n}$ is the non-increasing out-degree sequence of $T$, then
$$\lfloor a/2\rfloor+d^{+}_{1}+\cdots+d^{+}_{a}=\lfloor a/2\rfloor+deg^{+}(v_{1})+\cdots+deg^{+}(v_{a})=2a^{2}+a+\lfloor a/2\rfloor-1\geq n.$$
Thus $s\ell^{+}(T)\leq a$. Suppose to the contrary that $s\ell^{+}(T)=k\leq a-1$. Then,
\begin{equation*}
\begin{array}{lcl}
\lfloor k/2\rfloor+d^{+}_{1}+\cdots+d^{+}_{k}\leq \lfloor a/2\rfloor+d^{+}_{1}+\cdots+d^{+}_{a-1}&=&\lfloor a/2\rfloor+deg^{+}(v_{1})+\cdots+deg^{+}(v_{a-1})\\
&=&2a^{2}-\lceil a/2\rceil-1<n,
\end{array}
\end{equation*}
which is a contradiction. Therefore, $s\ell^{+}(T)=a$ and this completes the proof.
\end{proof}

\section{Concluding remarks and open problems}

\ \ \ We have studied several relationships between the packing number and the open, total and standard domination numbers of digraphs. We have dedicated special attention to the directed trees and contrafunctional digraphs while proving our results. As a remarkable aspect, we have settled two problems presented in [Australas. J. Combin. 39 (2007), 283--292]. Finally, as future research activities we next point out two questions that we consider would be interesting to be dealt with.
\begin{itemize}
  \item We first notice that any total dominating set in a digraph $D$ is also a dominating set of $D$. Moreover, if we consider a dominating set $S$ of $D$, by taking the set $S$ and one neighbor (not in $S$) of each vertex of $S$ we can easily construct a total dominating set of $D$. Thus, we can clearly deduce the following bounds. For any digraph $D$,
$$\gamma(D)\le \gamma_t(D)\le 2\gamma(D).$$ An equivalent result is well known for graphs, and the problem of characterizing the equality in these bounds remains open for graphs (see the survey \cite{Hen}). Thus, it is worthwhile to consider the equivalent problem for digraphs.
  \item There is not much knowledge about complexity aspects of domination parameters in digraphs. According to this fact, we think will deserve the attention to study the computational complexity of computing the packing number of digraphs.
\end{itemize}



\end{document}